\documentclass[12pt]{article}

\usepackage{amsmath,amssymb,amsthm,mathtools}
\usepackage{enumitem}
\usepackage{hyperref}

\usepackage{setspace}
\newtheorem{theorem}{Theorem}[section]
\newtheorem{lemma}[theorem]{Lemma}
\newtheorem{proposition}[theorem]{Proposition}
\newtheorem{corollary}[theorem]{Corollary}

\theoremstyle{definition}
\newtheorem{definition}[theorem]{Definition}
\newtheorem{remark}[theorem]{Remark}

\newcommand{\Ften} {\fontsize{10}{11}\selectfont  }

\usepackage{fancyhdr}
\title{Markov Properties of $k$-Record Processes via Order Statistics}
\author{R. Labouriau\footnote{Department of Mathematics, Aarhus University.\\
                                   e-mail:   \texttt{rodrigo.labouriau@math.au.dk} and
                                                 \texttt{rodrigo.labouriau@rlstatlab.com}}
}
\date{Spring 2026}

\begin{document}

\maketitle

\begin{abstract}  \Ften
The theory of $k$-record values (Type~2 $k$-records) plays an important
role in the study of partial extremes and in statistical inference based
on record data. A common approach in the literature reduces the analysis
of $k$-records associated with a distribution function $F$ to that of
ordinary record values from the transformed distribution
$F_{1:k}(x)=1-(1-F(x))^k$. This representation is widely used in
deriving distributional results and inferential procedures, often without
an explicit construction of the underlying stochastic mechanism, and
relies on a structural property of order statistics that, although
classical, is typically invoked without proof in the record literature.

In this paper we provide a direct derivation of the probabilistic
structure of $k$-record processes based on the sequence of running order
statistics $U_n=X_{n-k+1:n}$, the $k$-th largest among the first $n$
observations. We show that this process forms a Markov chain with an
explicit transition kernel, making explicit the conditional structure of
upper order statistics underlying the classical reduction. Under the
continuity assumption on $F$, the usual Type~2 $k$-record times coincide
almost surely with the record times of $(U_n)_{n\ge k}$.

This yields a transparent construction of the $k$-record process as the
record process of a Markov chain. Classical distributional results,
including the representation through $F_{1:k}$ and the joint density of
the first $m$ $k$-record values, are recovered in a unified framework.
\end{abstract}

 \newpage
 \tableofcontents
 \newpage
\section{Introduction}\label{sec:introduction}

Record values constitute a classical topic in probability theory, going back to early work of Chandler, 1952 \cite{Chandler1952} and Rényi 1962 \cite{Renyi1962}; see also Nevzorov, 2001 \cite {Nevzorov2001} for a comprehensive account. The subject has a wide range of applications in statistics, reliability theory and extreme value analysis. A standard book-length reference is the monograph of Arnold, Balakrishnan and Nagaraja \cite{ArnoldBalakrishnanNagaraja1998}. More recently, increasing attention has been devoted to generalisations of ordinary records, among which the so-called $k$-record values, or Type~2 $k$-records, play a prominent role; see, for example, Hofmann and Balakrishnan, 2004 \cite{HofmannBalakrishnan2004} and Ahmadi and Doostparast, 2008 \cite{AhmadiDoostparast2008}.

Given a sequence of independent and identically distributed random variables, $k$-record values arise naturally when one considers the evolution of the $k$-th largest observation among the first $n$ observations. These generalised records retain a number of the attractive features of ordinary records, whilst providing a richer structure that is better suited for statistical inference based on partial extreme information. In particular, $k$-records have been used in inferential problems involving likelihood methods, Bayesian procedures and information measures; see \cite{HofmannBalakrishnan2004,AhmadiDoostparast2008,WangYe2015}.

A common approach in the literature consists in reducing the study of $k$-record values associated with a distribution function $F$ to the study of ordinary record values from the transformed distribution
\[
F_{1:k}(x)=1-(1-F(x))^k.
\]
This representation is widely used as a starting point for the derivation of joint distributions and likelihood functions. For example, Arnold et al.\ \cite[p.~43]{ArnoldBalakrishnanNagaraja1998} introduce Type~2 $k$-records in this framework, while later papers such as Hofmann and Balakrishnan \cite{HofmannBalakrishnan2004} and Ahmadi and Doostparast \cite{AhmadiDoostparast2008} use the corresponding distributional formulas in subsequent developments. Related ordinary-record arguments are also invoked in inferential work such as Wang and Ye \cite{WangYe2015}.

While this reduction is standard, it implicitly relies on a structural property of order statistics that is often used without explicit justification. Namely, one needs that, conditionally on the $k$-th largest observation, the remaining upper order statistics are distributed as order statistics from the truncated distribution above that value. Although this fact is classical in the theory of order statistics (see, e.g., Arnold et al.~\cite[pp.~18--19]{ArnoldBalakrishnanNagaraja1998} or David and Nagaraja~\cite[Sec.~2.4--2.5]{DavidNagaraja2003}), to the best of our knowledge it is not explicitly formulated and proved in the literature on $k$-records, where it is typically invoked implicitly.
One of the contributions of the present paper is to isolate this property in a precise form and to provide a direct argument adapted to the record-setting framework.

The purpose of the present paper is to provide a direct and rigorous derivation of the probabilistic structure underlying the $k$-record process. Our approach is based on the analysis of the sequence of running order statistics
\[
U_n=X_{n-k+1:n}, \qquad n\ge k,
\]
that is, the $k$-th largest observation among the first $n$ observations. We show in Section~\ref{sec:markov-order-statistics} that this process is a Markov chain with an explicit transition mechanism. We then compare, in Section~\ref{sec:record-times}, two natural constructions of record times: the usual Type~2 $k$-record times and the ordinary record times associated with the process $(U_n)_{n\ge k}$. Under the continuity assumption on $F$, we prove that these two constructions coincide almost surely.

This point of view yields a transparent construction of the $k$-record process and provides a natural framework in which classical results may be recovered as consequences. In particular, in Section~\ref{sec:k-record-process} we identify the $k$-record values with the record values of the Markov chain $(U_n)$, and in Section~\ref{sec:classical-results} we recover the standard representation through $F_{1:k}$ together with the usual joint density formulae for the first $m$ $k$-records.

Although the introductory material in this section is necessarily brief, our aim is not merely to rederive known formulas. Rather, the objective is to make explicit the underlying stochastic mechanism that links running order statistics, record times and the usual $k$-record distributions.

\section{Preliminaries}\label{sec:preliminaries}

Let $\{X_n\}_{n\ge1}$ be a sequence of independent and identically distributed random variables with common distribution function $F$. Unless otherwise stated, we assume throughout that $F$ is continuous.

For each $n\ge1$, let
\[
X_{1:n}\le X_{2:n}\le \cdots \le X_{n:n}
\]
denote the order statistics associated with $X_1,\dots,X_n$.

Fix an integer $k\ge1$. A central role in what follows is played by the process
\[
U_n=X_{n-k+1:n}, \qquad n\ge k,
\]
that is, the $k$-th largest observation among the first $n$ observations.

We shall consider two sequences of record times associated with this construction.

\begin{definition}\label{def:type2-k-record-times}
Set $v_1^{(k)}=k$, and for $n\ge1$ define recursively
\[
v_{n+1}^{(k)}
=
\inf\Bigl\{j>v_n^{(k)}:X_j>X_{\,v_n^{(k)}-k+1:v_n^{(k)}}\Bigr\}.
\]
The corresponding record values are
\[
R_n^{(k)}=X_{\,v_n^{(k)}-k+1:v_n^{(k)}}, \qquad n\ge1.
\]
These are the usual Type~2 $k$-record times and values; see \cite{ArnoldBalakrishnanNagaraja1998,AhmadiDoostparast2008}.
\end{definition}

\begin{definition}\label{def:record-times-Un}
Set $\mu_1^{(k)}=k$, and for $n\ge1$ define
\[
\mu_{n+1}^{(k)}
=
\inf\{j>\mu_n^{(k)}:U_j>U_{\mu_n^{(k)}}\}.
\]
The associated record values are
\[
U_{\mu_n^{(k)}}, \qquad n\ge1.
\]
\end{definition}

The sequences $\{v_n^{(k)}\}_{n\ge1}$ and $\{\mu_n^{(k)}\}_{n\ge1}$ represent two natural constructions of record times. The first is defined directly from the original observations, in the usual manner of Type~2 $k$-records, whereas the second is defined as the ordinary record-time sequence of the process $(U_n)_{n\ge k}$.

The connection between these two constructions is one of the main themes of the paper. In Section~\ref{sec:record-times} we shall prove that, under continuity of $F$, they coincide almost surely. This will allow us to interpret the $k$-record process as the record process associated with the Markov chain $(U_n)$ studied in Section~\ref{sec:markov-order-statistics}.

\section{The order-statistic process and its Markov property}
\label{sec:markov-order-statistics}

In this section we study the stochastic evolution of the upper order statistics generated by the sequence $(X_n)_{n\ge1}$. We show that the process formed by the largest $k$ order statistics is a time-homogeneous Markov chain. This result will serve as the basis for the analysis of $k$-record times and values in subsequent sections.

\subsection{The vector process of upper order statistics}
\label{subsec:vector-process}

Fix an integer $k\ge1$. For each $n\ge k$, define the random vector
\[
Y_n = \bigl(X_{n-k+1:n},\dots,X_{n:n}\bigr),
\]
taking values in the set
\[
\mathcal{E}_k
=
\{(y_1,\dots,y_k)\in\mathbb{R}^k : y_1 \le \cdots \le y_k\},
\]
endowed with the Borel $\sigma$-algebra inherited from $\mathbb{R}^k$.

Let $(\mathcal{F}_n)_{n\ge1}$ denote the natural filtration,
\[
\mathcal{F}_n = \sigma(X_1,\dots,X_n).
\]
Then $Y_n$ is $\mathcal{F}_n$-measurable for every $n\ge k$.

\medskip

We first formalise the update mechanism of the process. For $y=(y_1,\dots,y_k)\in\mathcal{E}_k$ and $x\in\mathbb{R}$, define
\[
\psi_k(y,x)
\]
as the vector obtained by taking the $k$ largest elements of the multiset $\{y_1,\dots,y_k,x\}$ and arranging them in increasing order. Equivalently, let $(z_1,\dots,z_{k+1})$ be the non-decreasing rearrangement of $(y_1,\dots,y_k,x)$, and set
\[
\psi_k(y,x) = (z_2,\dots,z_{k+1}).
\]

\medskip

\begin{lemma}
\label{lem:psi-measurable}
The mapping $\psi_k : \mathcal{E}_k \times \mathbb{R} \to \mathcal{E}_k$ is Borel measurable.
\end{lemma}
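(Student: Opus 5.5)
The plan is to show that $\psi_k$ is in fact continuous on $\mathcal{E}_k\times\mathbb{R}$. Borel measurability then follows at once, since $\mathcal{E}_k\times\mathbb{R}$ and $\mathcal{E}_k$ carry the Borel $\sigma$-algebras of the subspace topologies inherited from $\mathbb{R}^{k+1}$ and $\mathbb{R}^k$.

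To prove continuity, write $\psi_k=\pi\circ s\circ\iota$. Here $\iota(y,x)=(y_1,\dots,y_k,x)$ is the inclusion into $\mathbb{R}^{k+1}$. The map $s:\mathbb{R}^{k+1}\to\mathbb{R}^{k+1}$ is the sorting map sending a vector to its non-decreasing rearrangement $(z_1,\dots,z_{k+1})$. Finally, $\pi(z_1,\dots,z_{k+1})=(z_2,\dots,z_{k+1})$ is a coordinate projection. Both $\iota$ and $\pi$ are clearly continuous, so everything reduces to continuity of $s$. Its image lies in the non-decreasing cone, so $\pi\circ s\circ\iota$ indeed maps into $\mathcal{E}_k$.

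For the sorting map, I would use the explicit min--max formula for the $j$-th order statistic of $w=(w_1,\dots,w_{k+1})$:
\[
z_j(w)=\min_{\substack{I\subseteq\{1,\dots,k+1\}\\ |I|=k+2-j}}\ \max_{i\in I} w_i ,\qquad j=1,\dots,k+1.
\]
Each $w\mapsto\max_{i\in I}w_i$ is a finite maximum of coordinate functions and hence continuous. A finite minimum of continuous functions is continuous, so each $z_j$ is continuous, and therefore so is $s$.

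The only step needing any care is verifying the min--max identity, including in the presence of ties. I would argue it as follows. Any set $I$ of size $k+2-j$ must contain an index whose value is at least the $j$-th smallest value, because at most $j-1$ indices can carry values strictly below $z_j$. Hence $\max_{i\in I}w_i\ge z_j$ for every such $I$. Conversely, taking $I$ to be the indices of the $k+2-j$ smallest entries (breaking ties arbitrarily) attains the value $z_j$.

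If one prefers to avoid this combinatorics, there is an alternative route. Partition $\mathbb{R}^{k+1}$ into the finitely many Borel sets $A_\sigma=\{w: w_{\sigma(1)}\le\cdots\le w_{\sigma(k+1)}\}$, made disjoint by a fixed tie-breaking rule. On each $A_\sigma$, $s$ is the linear map $w\mapsto(w_{\sigma(1)},\dots,w_{\sigma(k+1)})$, so $s$ is Borel as a piecewise-measurable map on a finite Borel partition. Either route suffices.
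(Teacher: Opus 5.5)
Your proof is correct and is essentially the paper's argument made explicit: both write $\psi_k$ as the sorting map on $\mathbb{R}^{k+1}$, with each coordinate a min/max of coordinates, followed by deletion of the smallest coordinate, and you also obtain the stronger conclusion that $\psi_k$ is continuous.

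There is one indexing slip. The displayed identity should read $z_j(w)=\min_{|I|=j}\max_{i\in I}w_i$, or equivalently $z_j(w)=\max_{|I|=k+2-j}\min_{i\in I}w_i$. With $|I|=k+2-j$ as you wrote it, the right-hand side equals $z_{k+2-j}$; for example, at $j=1$ it gives the maximum. Your justification paragraph, whose first claim fails as written when $k+2-j<j$, is exactly the proof of the corrected identity. The slip does not affect continuity of $s$, since the displayed family still covers every coordinate, and your partition argument is independently valid for measurability.
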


\begin{proof}
The mapping that sends a vector in $\mathbb{R}^{k+1}$ to its ordered version is measurable, as it can be expressed in terms of coordinate-wise minima and maxima. The operation of removing the smallest coordinate is continuous on the ordered region. Since $\psi_k$ is obtained by composing these measurable mappings, it is Borel measurable.
\end{proof}

\medskip

The following identity holds almost surely for all $n\ge k$:
\begin{equation}
\label{eq:Yn-recursion}
Y_{n+1} = \psi_k(Y_n, X_{n+1}).
\end{equation}

\begin{proposition}
\label{prop:vector-process-markov}
The process $(Y_n)_{n\ge k}$ is a time-homogeneous Markov chain with respect to the filtration $(\mathcal{F}_n)_{n\ge1}$. More precisely, for every bounded Borel function $g:\mathcal{E}_k\to\mathbb{R}$ and every $n\ge k$,
\[
\mathbb{E}\bigl[g(Y_{n+1}) \mid \mathcal{F}_n\bigr]
=
\int_{\mathbb{R}} g\bigl(\psi_k(Y_n,x)\bigr)\, dF(x)
\quad \text{a.s.}
\]
In particular,
\[
\mathbb{P}(Y_{n+1}\in A \mid \mathcal{F}_n)
=
\int_{\mathbb{R}} \mathbf{1}_A\bigl(\psi_k(Y_n,x)\bigr)\, dF(x),
\quad A\in\mathcal{B}(\mathcal{E}_k).
\]
\end{proposition}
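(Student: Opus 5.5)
The plan is to combine the recursion \eqref{eq:Yn-recursion} with the standard "freezing" lemma for conditional expectations. Fix $n\ge k$ and a bounded Borel $g:\mathcal{E}_k\to\mathbb{R}$, and set
\[
h(y,x)=g\bigl(\psi_k(y,x)\bigr),\qquad (y,x)\in\mathcal{E}_k\times\mathbb{R}.
\]
By Lemma~\ref{lem:psi-measurable}, $h$ is a bounded Borel function on $\mathcal{E}_k\times\mathbb{R}$. By \eqref{eq:Yn-recursion} we have $g(Y_{n+1})=h(Y_n,X_{n+1})$ a.s. Here $Y_n$ is $\mathcal{F}_n$-measurable, and $X_{n+1}$ is independent of $\mathcal{F}_n=\sigma(X_1,\dots,X_n)$ with law $F$. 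The freezing lemma then gives
\[
\mathbb{E}\bigl[h(Y_n,X_{n+1})\mid\mathcal{F}_n\bigr]=\Phi(Y_n),
\qquad \Phi(y)=\int_{\mathbb{R}}h(y,x)\,dF(x),
\]
almost surely. This is exactly the stated formula.

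Since the freezing lemma is the only nontrivial ingredient, I would prove it directly rather than merely cite it. First, $\Phi$ is Borel measurable and bounded by Fubini–Tonelli, applied to the bounded measurable $h$ and the product measure. Second, I check the defining property of conditional expectation: for $B\in\mathcal{F}_n$ I need $\mathbb{E}[\mathbf{1}_B\,h(Y_n,X_{n+1})]=\mathbb{E}[\mathbf{1}_B\,\Phi(Y_n)]$. The pair $(\mathbf{1}_B,Y_n)$ is $\mathcal{F}_n$-measurable and independent of $X_{n+1}$, so the joint law of $(\mathbf{1}_B,Y_n,X_{n+1})$ is a product measure. Fubini then yields the identity immediately. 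An alternative route is a monotone class argument: verify the identity for $h=\mathbf{1}_{C\times D}$ with $C\in\mathcal{B}(\mathcal{E}_k)$ and $D\in\mathcal{B}(\mathbb{R})$, and extend by the $\pi$–$\lambda$ theorem and linearity or monotone convergence to all bounded Borel $h$. The ``in particular'' statement follows by taking $g=\mathbf{1}_A$.

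For the Markov and time-homogeneity conclusions, I note that the right-hand side is $P g(Y_n)$, where
\[
P(y,A)=\int_{\mathbb{R}}\mathbf{1}_A\bigl(\psi_k(y,x)\bigr)\,dF(x).
\]
This $P$ is a transition kernel on $\mathcal{E}_k$: $P(\cdot,A)$ is measurable by Fubini, and $P(y,\cdot)$ is the image of $F$ under $x\mapsto\psi_k(y,x)$, hence a probability measure. The kernel does not depend on $n$, so the chain is time-homogeneous. Because $P g(Y_n)$ is $\sigma(Y_n)$-measurable, the tower property gives $\mathbb{E}[g(Y_{n+1})\mid Y_n]=\mathbb{E}[g(Y_{n+1})\mid\mathcal{F}_n]$, which is the Markov property with respect to $(\mathcal{F}_n)$.

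The main, though modest, obstacle is that \eqref{eq:Yn-recursion} holds only almost surely. I would handle this by noting that conditional expectations are unchanged by modifications on null sets. In fact the recursion holds identically, because the top $k$ of $n+1$ values are the top $k$ of the previous top $k$ together with the new value. I would also check that $\psi_k$ really maps into $\mathcal{E}_k$, so that the composition $g\circ\psi_k$ is well defined.
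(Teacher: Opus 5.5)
Your proposal is correct and follows the paper's proof. Both write $g(Y_{n+1})=g(\psi_k(Y_n,X_{n+1}))$ and use that $Y_n$ is $\mathcal{F}_n$-measurable while $X_{n+1}$ is independent of $\mathcal{F}_n$ with law $F$. The only difference is that you prove the freezing lemma via Fubini and verify that $P$ is a genuine time-independent transition kernel (and that the recursion holds surely), details the paper leaves to ``standard properties of conditional expectation.''
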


\begin{proof}
Let $g$ be a bounded Borel function on $\mathcal{E}_k$. Using \eqref{eq:Yn-recursion}, we have
\[
g(Y_{n+1}) = g\bigl(\psi_k(Y_n,X_{n+1})\bigr).
\]
Since $Y_n$ is $\mathcal{F}_n$-measurable and $X_{n+1}$ is independent of $\mathcal{F}_n$ with distribution $F$, it follows from the standard properties of conditional expectation that
\[
\mathbb{E}\bigl[g(Y_{n+1}) \mid \mathcal{F}_n\bigr]
=
\int_{\mathbb{R}} g\bigl(\psi_k(Y_n,x)\bigr)\, dF(x)
\quad \text{a.s.}
\]
This establishes the Markov property, as the conditional distribution of $Y_{n+1}$ given $\mathcal{F}_n$ depends on the past only through $Y_n$. The expression for conditional probabilities follows by taking $g=\mathbf{1}_A$.
\end{proof}
The following proposition reformulates the Markov property of $(Y_n)$
by making the deterministic update mechanism explicit through a
mapping~$\Phi$. While equivalent in content to
Proposition~\ref{prop:vector-process-markov}, this formulation is more directly
suited for the strong Markov property argument used in
Section~\ref{sec:k-record-process}.

\begin{proposition}
\label{prop:vector-process-markovy}
Assume that $F$ is continuous. Then the process $(Y_n)_{n\ge k}$ is a time-homogeneous Markov chain with respect to the filtration $(\mathcal F_n)_{n\ge k}$, where
\[
\mathcal F_n=\sigma(X_1,\dots,X_n).
\]
\end{proposition}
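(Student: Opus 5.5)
The plan is to prove the statement directly from the recursion \eqref{eq:Yn-recursion}, this time writing the update through an explicit deterministic map. Set $\Phi(y,x)=\psi_k(y,x)$ for $(y,x)\in\mathcal E_k\times\mathbb R$. By Lemma~\ref{lem:psi-measurable}, $\Phi$ is Borel measurable. For $y\in\mathcal E_k$ and $A\in\mathcal B(\mathcal E_k)$, define the candidate kernel
\[
P(y,A)=\int_{\mathbb R}\mathbf 1_A\bigl(\Phi(y,x)\bigr)\,dF(x).
\]

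The first step is to check that $P$ is a Markov kernel.
\begin{itemize}
\item For fixed $y$, the map $A\mapsto P(y,A)$ is the image of $F$ under $x\mapsto\Phi(y,x)$, hence a probability measure.
\item For fixed $A$, the map $y\mapsto P(y,A)$ is measurable by Fubini--Tonelli, since $(y,x)\mapsto\mathbf 1_A(\Phi(y,x))$ is jointly measurable.
\end{itemize}

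Continuity of $F$ is used only to justify the almost sure validity of \eqref{eq:Yn-recursion}. Ties among the $X_i$ occur with probability zero, so $Y_{n+1}$ coincides a.s.\ with the ordered top $k$ values of $\{X_1,\dots,X_{n+1}\}$, and this is exactly $\Phi(Y_n,X_{n+1})$. In fact the recursion holds even with ties, as a statement about multisets, so the assumption is harmless. I would remark on this rather than rely on it.

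Next comes the Markov property itself. Fix $n\ge k$ and $A$. Since $Y_n$ is $\mathcal F_n$-measurable and $X_{n+1}$ is independent of $\mathcal F_n$, the freezing (substitution) lemma for conditional expectations gives
\[
\mathbb P(Y_{n+1}\in A\mid\mathcal F_n)=P(Y_n,A)\quad\text{a.s.}
\]
This is the content of Proposition~\ref{prop:vector-process-markov}, which I would simply invoke. Because the right-hand side is $\sigma(Y_n)$-measurable, the tower property gives $\mathbb P(Y_{n+1}\in A\mid\mathcal F_n)=\mathbb P(Y_{n+1}\in A\mid Y_n)$. Time homogeneity follows because neither $\Phi$ nor $F$ depends on $n$.

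Finally, to state the property in the form suited to strong Markov arguments, I would extend it by induction to finite-dimensional futures. For bounded measurable $g$ on $\mathcal E_k^m$,
\[
\mathbb E\bigl[g(Y_{n+1},\dots,Y_{n+m})\mid\mathcal F_n\bigr]=\int g\bigl(\Phi_1(Y_n,x_1),\dots,\Phi_m(Y_n,x_1,\dots,x_m)\bigr)\,dF^{\otimes m}(x),
\]
where $\Phi_j$ denotes the $j$-fold iterate of $\Phi$. The same independence argument works, since $(X_{n+1},\dots,X_{n+m})$ is independent of $\mathcal F_n$.

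I see no serious obstacle. The only delicate point is the measurability needed for the substitution lemma, namely joint measurability of $\Phi$. That is supplied by Lemma~\ref{lem:psi-measurable}, so the proof is essentially bookkeeping.
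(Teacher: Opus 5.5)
Your proposal is correct and follows the paper's route. You write $Y_{n+1}=\Phi(Y_n,X_{n+1})$ with $\Phi$ jointly measurable, then use the independence of $X_{n+1}$ from $\mathcal F_n$ to get $\mathbb P(Y_{n+1}\in B\mid\mathcal F_n)=\int \mathbf 1_B(\Phi(Y_n,x))\,dF(x)$, which is a function of $Y_n$ alone; your additional checks (that this defines a genuine kernel, that the recursion and hence the conclusion do not actually need continuity of $F$, and the extension to finite-dimensional futures) are sound refinements that the paper leaves implicit.
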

\begin{proof}
For each $n\ge k$, the vector $Y_n=(Y_{n,1},\dots,Y_{n,k})$ consists of the $k$ largest observations among $X_1,\dots,X_n$, arranged in increasing order. Hence there exists a measurable mapping
\[
\Phi:\mathbb R^k\times\mathbb R\to\mathbb R^k
\]
such that
\[
Y_{n+1}=\Phi(Y_n,X_{n+1}), \qquad n\ge k.
\]
Indeed, $\Phi(y,x)$ is obtained by inserting $x$ into the ordered vector $y=(y_1,\dots,y_k)$ and then retaining the $k$ largest entries, again in increasing order.

Now let $B\in\mathcal B(\mathbb R^k)$. Since $X_{n+1}$ is independent of $\mathcal F_n$ and has distribution function $F$, we obtain
\[
\mathbb P(Y_{n+1}\in B\mid \mathcal F_n)
=
\mathbb P(\Phi(Y_n,X_{n+1})\in B\mid \mathcal F_n)
=
\int_{\mathbb R}\mathbf 1_{\{\Phi(Y_n,x)\in B\}}\,dF(x).
\]
The right-hand side is a measurable function of $Y_n$ only. Therefore,
\[
\mathbb P(Y_{n+1}\in B\mid \mathcal F_n)
=
P(Y_n,B)
\]
for a suitable transition kernel \(P\) on \(\mathbb R^k\). This proves that $(Y_n)_{n\ge k}$ is a time-homogeneous Markov chain with respect to $(\mathcal F_n)_{n\ge k}$.
\end{proof}

\subsection{The one-dimensional process $U_n$}
\label{subsec:one-dimensional-process}

We now turn to the process
\[
U_n=X_{n-k+1:n}, \qquad n\ge k,
\]
that is, the running $k$th largest observation.

The Markov property of $(U_n)_{n\ge k}$ is less immediate than that of the vector process $(Y_n)_{n\ge k}$, since in general a measurable function of a Markov chain need not be Markov. We therefore derive the transition mechanism of $(U_n)$ directly.

\medskip

Let
\[
Z_i=F(X_i), \qquad i\ge1.
\]
Since $F$ is continuous, the random variables $(Z_i)_{i\ge1}$ are independent and uniformly distributed on $(0,1)$. For $n\ge k$, define
\[
V_n=Z_{n-k+1:n},
\]
the running $k$th largest order statistic of the transformed sample. Then
\[
V_n=F(U_n) \qquad \text{a.s.}
\]

 The next lemma describes the conditional law of the upper order statistics above $V_n$, a classical property of order statistics; see, e.g., Arnold et al.~\cite{ArnoldBalakrishnanNagaraja1998}, pp.~18--19 (see also David and Nagaraja~\cite{DavidNagaraja2003}, Sec.~2.4--2.5, pp~40-47). This result is widely used in the literature on records, to the best of our knowledge without an explicit proof.
 \begin{lemma}
\label{lem:conditional-upper-uniform-order-stats}
Fix $n\ge k$. For $0<u<1$, the conditional distribution of
\[
\bigl(Z_{n-k+2:n},\dots,Z_{n:n}\bigr)
\quad \text{given } V_n=u
\]
coincides with the distribution of the order statistics of $k-1$ independent random variables uniformly distributed on $(u,1)$.
\end{lemma}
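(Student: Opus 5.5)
The plan is to compute the joint density of the top $k$ uniform order statistics and then condition on the smallest of them. Since $Z_1,\dots,Z_n$ are i.i.d.\ uniform on $(0,1)$, the vector $(Z_{n-k+1:n},\dots,Z_{n:n})$ has the classical joint density
\[
f(v,w_2,\dots,w_k)=\frac{n!}{(n-k)!}\,v^{\,n-k}\,\mathbf 1\{0<v<w_2<\cdots<w_k<1\}.
\]
We derive this directly rather than quoting it. For a Borel set $B$ of increasing $k$-tuples, we split the event $\{(Z_{n-k+1:n},\dots,Z_{n:n})\in B\}$ according to which indices carry the top $k$ values and in which order. By exchangeability, each of the $n!/(n-k)!$ ordered choices of indices $(i_1,\dots,i_k)$ contributes the same amount. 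This amount is the probability that $(Z_{i_1},\dots,Z_{i_k})\in B$ and the remaining $n-k$ variables lie below $Z_{i_1}$. Continuity of the uniform law makes ties null, so these events are a.s.\ disjoint. Integrating out the $n-k$ remaining coordinates gives the factor $v^{n-k}$.

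The marginal density of $V_n$ is then obtained by integrating out $w_2,\dots,w_k$ over $u<w_2<\cdots<w_k<1$. That region has volume $(1-u)^{k-1}/(k-1)!$, so
\[
f_{V_n}(u)=\frac{n!}{(n-k)!\,(k-1)!}\,u^{\,n-k}(1-u)^{k-1},
\]
which is the usual Beta$(n-k+1,k)$ density. Dividing the joint density by this marginal gives, for $0<u<1$, the conditional density
\[
\frac{(k-1)!}{(1-u)^{k-1}}\,\mathbf 1\{u<w_2<\cdots<w_k<1\}.
\]
This is exactly the joint density of the order statistics of $k-1$ i.i.d.\ uniforms on $(u,1)$, namely $(k-1)!$ times the product of the $(k-1)$ marginal densities $1/(1-u)$, restricted to the ordered region. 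The case $k=1$ is trivial, since the conditioned vector is empty.

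To be rigorous about what ``conditional distribution given $V_n=u$'' means, we will exhibit the kernel $Q(u,\cdot)=$ law of the ordered uniforms on $(u,1)$. We then verify
\[
\mathbb P\bigl(V_n\in C,\,(Z_{n-k+2:n},\dots,Z_{n:n})\in D\bigr)=\int_C Q(u,D)\,f_{V_n}(u)\,du
\]
for Borel sets $C$ and $D$. This identity follows from Fubini applied to the joint density, and it identifies $Q$ as a regular version of the conditional law.

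The main obstacle is the first step: justifying the joint density of the top $k$ order statistics carefully. The combinatorial decomposition over index assignments must be handled cleanly, with ties excluded almost surely and the count $n!/(n-k)!$ correct. Once that density is in hand, the remaining steps are routine integration.
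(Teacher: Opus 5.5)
Your proposal is correct and follows the same route as the paper's proof. Both take the joint density $\frac{n!}{(n-k)!}v^{n-k}$ of the top $k$ uniform order statistics, integrate over the ordered region to get the Beta$(n-k+1,k)$ marginal of $V_n$, and divide to obtain the conditional density $(k-1)!/(1-u)^{k-1}$ on $\{u<w_2<\cdots<w_k<1\}$; the only additions on your side are deriving the joint density by the index-assignment argument and checking via Fubini that the kernel is a regular conditional distribution, both of which the paper treats as standard.
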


\begin{proof}
The joint density of the upper $k$ order statistics
\[
\bigl(Z_{n-k+1:n},Z_{n-k+2:n},\dots,Z_{n:n}\bigr)
\]
is
\[
f_n(u_1,\dots,u_k)
=
\frac{n!}{(n-k)!}\,u_1^{\,n-k}
\mathbf{1}_{\{0<u_1<\cdots<u_k<1\}}.
\]
Indeed, this is the usual joint density of uniform order statistics specialised to the last $k$ coordinates.

Integrating out $u_2,\dots,u_k$, we obtain the marginal density of $V_n=Z_{n-k+1:n}$:
\[
f_{V_n}(u)
=
\frac{n!}{(n-k)!(k-1)!}\,
u^{\,n-k}(1-u)^{k-1},
\qquad 0<u<1.
\]
Hence the conditional density of
\[
\bigl(Z_{n-k+2:n},\dots,Z_{n:n}\bigr)
\quad \text{given } V_n=u
\]
is
\[
\frac{f_n(u,u_2,\dots,u_k)}{f_{V_n}(u)}
=
\frac{(k-1)!}{(1-u)^{k-1}}
\mathbf{1}_{\{u<u_2<\cdots<u_k<1\}}.
\]
This is precisely the joint density of the order statistics of $k-1$ independent random variables having the uniform distribution on $(u,1)$.
\end{proof}

\medskip

We now translate the preceding statement back to the original scale.

\begin{lemma}
\label{lem:conditional-upper-order-stats}
Fix $n\ge k$, and let
\[
F_x(y)=\frac{F(y)-F(x)}{1-F(x)}, \qquad y\ge x,
\]
for every $x$ such that $F(x)<1$. Then, conditionally on the event $\{U_n=x\}$, the random vector
\[
\bigl(X_{n-k+2:n},\dots,X_{n:n}\bigr)
\]
has the same distribution as the order statistics of $k-1$ independent random variables with distribution function $F_x$.
\end{lemma}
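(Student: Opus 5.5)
The plan is to transfer Lemma~\ref{lem:conditional-upper-uniform-order-stats} back to the original scale through the quantile function. Let $F^{-1}(z)=\inf\{y:F(y)\ge z\}$ denote the generalised inverse. Because $F$ is continuous, $F(F^{-1}(z))=z$ for $z\in(0,1)$, and $F^{-1}(F(X_i))=X_i$ almost surely, since the set of points where $F^{-1}\circ F$ differs from the identity is a union of intervals of constancy of $F$, and each has $F$-probability zero. Moreover $F^{-1}$ is non-decreasing, so it commutes with taking order statistics. Hence, almost surely,
\[
\bigl(X_{n-k+1:n},\dots,X_{n:n}\bigr)=\bigl(F^{-1}(Z_{n-k+1:n}),\dots,F^{-1}(Z_{n:n})\bigr),
\]
and in particular $U_n=F^{-1}(V_n)$ and $V_n=F(U_n)$.

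Next I would make precise what ``conditionally on $\{U_n=x\}$'' means, namely as a regular conditional distribution defined for $P_{U_n}$-almost every $x$. The key observation is that $\sigma(U_n)=\sigma(V_n)$ up to null sets, because each variable is a measurable function of the other almost surely. It therefore suffices to condition on $V_n=u$ with $u=F(x)$. For bounded Borel $h$ on $\mathbb{R}^{k-1}$, Lemma~\ref{lem:conditional-upper-uniform-order-stats} gives
\[
\mathbb{E}\bigl[h(X_{n-k+2:n},\dots,X_{n:n})\mid V_n\bigr]=\mathbb{E}\bigl[h(F^{-1}(W_{1:k-1}),\dots,F^{-1}(W_{k-1:k-1}))\bigr]\Big|_{u=V_n},
\]
where $W_1,\dots,W_{k-1}$ are i.i.d.\ uniform on $(u,1)$. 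Since $F^{-1}$ is monotone, $F^{-1}(W_{j:k-1})$ are the order statistics of $F^{-1}(W_1),\dots,F^{-1}(W_{k-1})$.

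It remains to identify the law of $F^{-1}(W)$ for $W$ uniform on $(u,1)$ with $u=F(x)<1$. For $y\ge x$,
\[
P(F^{-1}(W)\le y)=P(W\le F(y))=\frac{F(y)-F(x)}{1-F(x)}=F_x(y),
\]
using the Galois relation $F^{-1}(w)\le y\iff w\le F(y)$. For $y<x$ the probability is $0$, because $W>u=F(x)$ forces $F^{-1}(W)\ge x$. Thus each $F^{-1}(W_i)$ has distribution function $F_x$, and the claim follows once the conditional expectation is rewritten as a function of $U_n$ via $V_n=F(U_n)$.

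The main obstacle is measure-theoretic rather than computational. Conditioning on the null event $\{U_n=x\}$ has to be interpreted as a regular conditional law valid for almost every $x$. One must also check that passing from $\sigma(V_n)$ to $\sigma(U_n)$ loses nothing when $F$ has flat stretches. I would handle this by noting that $U_n$ almost surely avoids the interiors of the flat intervals of $F$, so $x\mapsto F(x)$ is injective on a set carrying the law of $U_n$. The condition $F(x)<1$ also holds $P_{U_n}$-almost surely, because $V_n<1$ almost surely.
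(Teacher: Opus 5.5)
Your proposal is correct and follows the paper's route: write $X_i=F^{-1}(Z_i)$, use that the non-decreasing map $F^{-1}$ commutes with order statistics, apply Lemma~\ref{lem:conditional-upper-uniform-order-stats}, and identify the law of $F^{-1}(W)$, $W\sim{\rm Uniform}(u,1)$, as $F_x$ via $F^{-1}(w)\le y\iff w\le F(y)$. You are more careful than the paper in passing from conditioning on $V_n$ to conditioning on $U_n$. The injectivity remark is not needed for that step, because the a.s.\ identities $U_n=F^{-1}(V_n)$ and $V_n=F(U_n)$ already give $\sigma(U_n)=\sigma(V_n)$ modulo null sets; as stated, it would also require discarding the countably many endpoints of flat stretches, which $U_n$ hits with probability zero.
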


\begin{proof}
Let
\[
Q(u)=\inf\{x\in\mathbb{R}:F(x)\ge u\}, \qquad 0<u<1,
\]
denote the generalised inverse of $F$. Since $F$ is continuous, $Q(Z_i)$ has distribution function $F$, and we may realise the sample on the same probability space by taking $X_i=Q(Z_i)$ for all $i\ge1$.

Now fix $x$ with $F(x)<1$, and write $u=F(x)$. Since $Q$ is non-decreasing, it preserves order, and therefore
\[
X_{j:n}=Q(Z_{j:n}), \qquad 1\le j\le n,
\]
almost surely. By Lemma~\ref{lem:conditional-upper-uniform-order-stats}, conditionally on $V_n=u$, the vector
\[
\bigl(Z_{n-k+2:n},\dots,Z_{n:n}\bigr)
\]
has the same distribution as the order statistics of $k-1$ independent ${\rm Uniform}(u,1)$ random variables. Applying the monotone map $Q$ coordinatewise, we conclude that conditionally on $U_n=x$, the vector
\[
\bigl(X_{n-k+2:n},\dots,X_{n:n}\bigr)
\]
has the same distribution as the order statistics of $k-1$ independent random variables with distribution function
\[
y\mapsto \mathbb{P}(Q(W)\le y),
\qquad W\sim {\rm Uniform}(u,1).
\]
For $y\ge x$,
\[
\mathbb{P}(Q(W)\le y)
=
\mathbb{P}(W\le F(y)\mid W>u)
=
\frac{F(y)-u}{1-u}
=
\frac{F(y)-F(x)}{1-F(x)}.
\]
This proves the claim.
\end{proof}

\medskip

We can now derive the transition kernel of $(U_n)$.

\begin{theorem}
\label{thm:transition-kernel-Un}
The process $(U_n)_{n\ge k}$ is a time-homogeneous Markov chain 
with respect to its natural filtration
$\mathcal{G}_n=\sigma(U_k,\dots,U_n)$, $n\ge k$.
Its transition kernel $Q_k$ is given by
\begin{equation}
\label{eq:transition-kernel-Un}
Q_k(x,A)
=
F(x)\,\delta_x(A)
+
\int_{A\cap(x,\infty)}
k\frac{(1-F(y))^{k-1}}{(1-F(x))^{k-1}}\,dF(y),
\qquad A\in\mathcal{B}(\mathbb{R}),
\end{equation}
for every $x$ such that $F(x)<1$, where $\delta_x$ denotes the Dirac measure at $x$.
\end{theorem}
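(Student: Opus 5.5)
The plan is to compute the conditional law of $U_{n+1}$ given $\mathcal{F}_n$, show that it depends only on $U_n$, and then pass to the smaller filtration $(\mathcal{G}_n)$ by the tower property. We use the update rule \eqref{eq:Yn-recursion}. Because $F$ is continuous, there are almost surely no ties, so $U_{n+1}=U_n$ if $X_{n+1}<U_n$ and $U_{n+1}=\min(X_{n+1},Y_{n,2})$ if $X_{n+1}>U_n$. Here $Y_{n,2}=X_{n-k+2:n}$ is the $(k-1)$-th largest observation; when $k=1$ we set $Y_{n,2}=+\infty$. By Proposition~\ref{prop:vector-process-markov}, for $A\in\mathcal B(\mathbb R)$,
\[
\mathbb P(U_{n+1}\in A\mid\mathcal F_n)=F(U_n)\mathbf 1_A(U_n)+\int_{(U_n,\infty)}\mathbf 1_A\bigl(\min(x,Y_{n,2})\bigr)\,dF(x).
\]
This expression still depends on $Y_{n,2}$, not only on $U_n$. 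So we next condition on the intermediate $\sigma$-field $\sigma(Y_k,\dots,Y_n)$, or more simply on $\mathcal{G}_n\vee\sigma(Y_n)$. The key step is to then integrate out $Y_{n,2}$ given $U_n$.

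For that step we use Lemma~\ref{lem:conditional-upper-order-stats}. Conditionally on $U_n=x$, $Y_{n,2}$ is the minimum of $k-1$ i.i.d.\ variables with distribution $F_x$. Hence $\mathbb P(Y_{n,2}>y\mid U_n=x)=\bigl((1-F(y))/(1-F(x))\bigr)^{k-1}$ for $y\ge x$. Combined with the independence of $X_{n+1}$, this gives that, given $U_n=x$ and $X_{n+1}>x$, the variable $\min(X_{n+1},Y_{n,2})$ is the minimum of $k$ independent variables: one drawn from $F$ restricted to $(x,\infty)$ (unnormalised mass $1-F(x)$), and $k-1$ drawn from $F_x$. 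Its unconditional-on-$X_{n+1}$ contribution is therefore
\[
\mathbb P\bigl(X_{n+1}>x,\ \min(X_{n+1},Y_{n,2})>y\bigr)=(1-F(x))\Bigl(\tfrac{1-F(y)}{1-F(x)}\Bigr)^{k},\qquad y\ge x .
\]
Differentiating in $y$ with respect to $dF$ produces exactly the density $k(1-F(y))^{k-1}/(1-F(x))^{k-1}$ in \eqref{eq:transition-kernel-Un}. The total masses also check: $(1-F(x))+F(x)=1$. For this differentiation, continuity of $F$ lets us write $d[-(1-F)^k]=k(1-F)^{k-1}dF$ via the change of variables $u=F(y)$.

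The main obstacle is a conditioning subtlety. Lemma~\ref{lem:conditional-upper-order-stats} only gives the law of $Y_{n,2}$ given $U_n$. To obtain the Markov property for $(U_n)$, we need that same conditional law given the whole past $\mathcal G_n=\sigma(U_k,\dots,U_n)$. We plan to prove this by showing that, conditionally on $(U_k,\dots,U_n)$, the upper $k-1$ order statistics above $U_n$ are still i.i.d.\ $F_{U_n}$-order statistics. We would do this by induction on $n$, working in the uniform scale $Z_i$, in the same spirit as Lemma~\ref{lem:conditional-upper-uniform-order-stats}.

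For the inductive step we split on the event $\{X_{n+1}<U_n\}$ versus its complement. On $\{X_{n+1}<U_n\}$, the vector of upper statistics is unchanged, and the event depends only on $X_{n+1}$, which is independent. On the complement, given $U_{n+1}=y>U_n$, the $k-1$ values above $y$ consist of the survivors among $Y_{n,2},\dots,Y_{n,k}$ and possibly $X_{n+1}$. By the memoryless property of truncated i.i.d.\ samples (truncation of $F_x$ at $y$ gives $F_y$) together with exchangeability, these are again i.i.d.\ $F_y$ order statistics, independent of the earlier path.

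Once this induction is established, the displayed computation of $\mathbb P(U_{n+1}\in A\mid\mathcal G_n)$ reduces to $Q_k(U_n,A)$. This gives both the Markov property and time homogeneity, since $Q_k$ does not depend on $n$.
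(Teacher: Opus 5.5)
Your argument is correct. The kernel computation is the same as the paper's. Both derive $\mathbb{P}(U_{n+1}>y\mid U_n=x)=(1-F(y))^k/(1-F(x))^{k-1}$ from the independence of $X_{n+1}$ and from the law of $X_{n-k+2:n}$ supplied by Lemma~\ref{lem:conditional-upper-order-stats}. Both then read off the atom $F(x)$ at $x$ and the density on $(x,\infty)$.

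Where you genuinely differ is the Markov property with respect to $(\mathcal{G}_n)$. The paper computes only $\mathbb{P}(U_{n+1}\in\cdot\mid U_n)$. It then concludes that, because Lemma~\ref{lem:conditional-upper-order-stats} conditions on $U_n$ alone, conditioning also on $U_k,\dots,U_{n-1}$ changes nothing. But that lemma says nothing about the earlier path, so this is precisely the step that needs proof. Your strengthened inductive claim supplies it: given all of $\mathcal{G}_n$, the top $k-1$ order statistics are distributed as order statistics of $k-1$ i.i.d.\ $F_{U_n}$ variables. Combined with the tower identity $\mathbb{P}(\cdot\mid\mathcal{G}_n)=\mathbb{E}[\mathbb{P}(\cdot\mid\mathcal{F}_n)\mid\mathcal{G}_n]$, this is exactly what closes the argument. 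The paper's route is shorter; yours is complete.

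There are two points to tighten when you write it out.

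First, drop the suggestion of conditioning on $\sigma(Y_k,\dots,Y_n)$ or $\mathcal{G}_n\vee\sigma(Y_n)$. Both contain $Y_{n,2}$, so nothing gets integrated out. The relevant $\sigma$-field is $\mathcal{G}_n$ itself, reached by the tower property from your $\mathcal{F}_n$-formula.

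Second, in the inductive step, write $W_n=(X_{n-k+2:n},\dots,X_{n:n})$ and state explicitly that $\{X_{n+1}<U_n\}=\{U_{n+1}=U_n\}$ a.s. by continuity of $F$, so your case split is $\mathcal{G}_{n+1}$-measurable.
\begin{itemize}
\item On $\{X_{n+1}<U_n\}$, use $\mathbb{P}(W_n\in B,\,X_{n+1}<U_n\mid\mathcal{G}_n)=F(U_n)\,\mathbb{P}(W_n\in B\mid\mathcal{G}_n)$.
\item On $\{X_{n+1}>U_n\}$, the order statistics of the $k$ values $X_{n+1},X_{n-k+2:n},\dots,X_{n:n}$ are, given $\mathcal{G}_n$, distributed as those of $k$ i.i.d.\ $F_{U_n}$ variables. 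This holds because $X_{n+1}$ is independent of $\mathcal{F}_n\supseteq\mathcal{G}_n$. Then $U_{n+1}$ is their minimum. The case $n=k$ of Lemma~\ref{lem:conditional-upper-order-stats}, applied to $F_{U_n}$ (whose truncation at $y$ is $F_y$), gives the claim at time $n+1$.
\end{itemize}
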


\begin{proof}
Fix $n\ge k$ and let $x$ be such that $F(x)<1$. On the event $\{U_n=x\}$ there are exactly $k-1$ sample points strictly larger than $x$, namely
\[
X_{n-k+2:n},\dots,X_{n:n},
\]
and all remaining sample points are less than or equal to $x$.

Let $X_{n+1}$ be the new observation. Then:
\begin{itemize}
\item if $X_{n+1}\le x$, the running $k$th largest value remains unchanged, so $U_{n+1}=x$;
\item if $X_{n+1}>x$, then $U_{n+1}$ is the minimum of the $k$ values
\[
X_{n-k+2:n},\dots,X_{n:n},X_{n+1}.
\]
\end{itemize}

It follows that, for every $y>x$,
\begin{align*}
\mathbb{P}(U_{n+1}>y\mid U_n=x)
&=
\mathbb{P}\bigl(X_{n+1}>y,\ X_{n-k+2:n}>y \mid U_n=x\bigr).
\end{align*}
By independence of $X_{n+1}$ and $\mathcal{F}_n$,
\[
\mathbb{P}(X_{n+1}>y\mid U_n=x)=1-F(y).
\]
Moreover, by Lemma~\ref{lem:conditional-upper-order-stats}, conditionally on $U_n=x$, the variable $X_{n-k+2:n}$ is the minimum of $k-1$ independent random variables with distribution function $F_x$. Therefore
\[
\mathbb{P}(X_{n-k+2:n}>y\mid U_n=x)
=
\left(1-F_x(y)\right)^{k-1}
=
\left(\frac{1-F(y)}{1-F(x)}\right)^{k-1}.
\]
Hence, for $y>x$,
\begin{equation}
\label{eq:tail-kernel-Un}
\mathbb{P}(U_{n+1}>y\mid U_n=x)
=
(1-F(y))
\left(\frac{1-F(y)}{1-F(x)}\right)^{k-1}
=
\frac{(1-F(y))^k}{(1-F(x))^{k-1}}.
\end{equation}

Taking $y\downarrow x$ in \eqref{eq:tail-kernel-Un}, and using continuity of $F$, we obtain
\[
\mathbb{P}(U_{n+1}>x\mid U_n=x)=1-F(x),
\]
so that
\[
\mathbb{P}(U_{n+1}=x\mid U_n=x)=F(x).
\]
Thus the kernel has an atom of mass $F(x)$ at the point $x$.

Finally, for $y>x$,
\[
\mathbb{P}(x<U_{n+1}\le y\mid U_n=x)
=
\frac{(1-F(x))^k-(1-F(y))^k}{(1-F(x))^{k-1}}
=
\int_{(x,y]}
k\frac{(1-F(t))^{k-1}}{(1-F(x))^{k-1}}\,dF(t).
\]
Together with the atom at $x$, this proves~\eqref{eq:transition-kernel-Un}.
Since the right-hand side does not depend on~$n$, the kernel $Q_k$ is
time-homogeneous. It remains to verify the Markov property with respect
to the natural filtration $(\mathcal{G}_n)$.
Note that the conditional distribution
$\mathbb{P}(U_{n+1}\in\cdot\mid U_n=x)=Q_k(x,\cdot)$
was computed by conditioning only on $U_n=x$: the independence of
$X_{n+1}$ and $\mathcal{F}_n$ entered through $\mathbb{P}(X_{n+1}>y)=1-F(y)$,
while the conditional law of
$(X_{n-k+2:n},\dots,X_{n:n})$ given $U_n=x$ was supplied by
Lemma~\ref{lem:conditional-upper-order-stats}, which conditions
on $U_n$ alone and involves no other past information.
In particular, the regular conditional distribution of
$U_{n+1}$ given $U_n$ does not depend on $(U_k,\dots,U_{n-1})$, and
$(U_n)_{n\ge k}$ is a Markov chain with transition kernel~$Q_k$.
\end{proof}

\begin{remark}
\label{rem:kernel-special-case-k1Markov}
The process $(U_n)_{n \ge k}$ is in general \emph{not} Markov with respect to the larger filtration $(\mathcal F_n)_{n \ge 1}$, where $\mathcal F_n = \sigma(X_1,\ldots,X_n)$. Indeed, conditionally on $\mathcal F_n$, the full vector $Y_n=(y_1,\ldots,y_k)$ is known, and the distribution of $U_{n+1}$ depends not only on $y_1 = U_n$ but also on the remaining components $(y_2,\ldots,y_k)$. 

The Markov property with respect to $(\mathcal G_n)$ arises after averaging over the conditional distribution of $(y_2,\ldots,y_k)$ given $U_n$, which is described in Lemma~3.4. This averaging yields the transition kernel $Q_k$.
\end{remark}

%

\begin{remark}
\label{rem:kernel-special-case-k1}
For $k=1$, the process $(U_n)$ reduces to the ordinary upper record process, that is, the sequence of running maxima. In this case \eqref{eq:transition-kernel-Un} becomes
\[
Q_1(x,A)=F(x)\,\delta_x(A)+F\bigl(A\cap(x,\infty)\bigr),
\]
which is the familiar transition kernel of the running maximum process.
\end{remark}

\section{Record times associated with the process $U_n$}\label{sec:record-times}
\subsection{Type~2 $k$-record times}
\label{subsec:type2-k-record-times}

We first recall the classical definition of Type~2 $k$-record times; see, for instance, \cite{ArnoldBalakrishnanNagaraja1998,AhmadiDoostparast2008}.

\begin{definition}
\label{def:type2-k-record-times-section4}
Set $v_1^{(k)}=k$, and for $n\ge1$ define recursively
\[
v_{n+1}^{(k)}
=
\inf\{j>v_n^{(k)}:X_j>U_{v_n^{(k)}}\}.
\]
The corresponding record values are
\[
R_n^{(k)}=U_{v_n^{(k)}}, \qquad n\ge1.
\]
\end{definition}

\begin{remark}
\label{rem:interpretation-k-records}
At time $v_n^{(k)}$, the quantity $U_{v_n^{(k)}}$ represents the $k$th largest observation among $X_1,\dots,X_{v_n^{(k)}}$. The next record time occurs when a new observation exceeds this level.
\end{remark}

\subsection{Record times of the process $U_n$}
\label{subsec:record-times-Un}

We now consider the ordinary record-time sequence associated with the process $(U_n)_{n\ge k}$.

\begin{definition}
\label{def:record-times-Un-section4}
Set $\mu_1^{(k)}=k$, and for $n\ge1$ define
\[
\mu_{n+1}^{(k)}
=
\inf\{j>\mu_n^{(k)}:U_j>U_{\mu_n^{(k)}}\}.
\]
\end{definition}

\begin{remark}
\label{rem:interpretation-Un-records}
The sequence $(\mu_n^{(k)})$ is the sequence of ordinary record times of the process $(U_n)$. In particular, $(U_{\mu_n^{(k)}})$ is a strictly increasing sequence whenever $F$ is continuous.
\end{remark}
\subsection{Comparison and coincidence of the two constructions}
\label{subsec:comparison-times}

We now compare the sequences $(v_n^{(k)})$ and $(\mu_n^{(k)})$.

\begin{lemma}
\label{lem:vn-le-mun}
For all $n\ge1$,
\[
v_n^{(k)} \le \mu_n^{(k)} \quad \text{almost surely}.
\]
\end{lemma}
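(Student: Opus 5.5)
Our plan is to argue by induction on $n$, using the deterministic relationship between jumps of $(U_j)$ and observations exceeding the current level. The basic observation is the one already used in the proof of Theorem~\ref{thm:transition-kernel-Un}. For $j\ge k$, either $X_{j+1}\le U_j$, in which case $U_{j+1}=U_j$, or $X_{j+1}>U_j$, in which case $U_{j+1}\ge U_j$. In particular $(U_j)_{j\ge k}$ is non-decreasing, and
\[
U_{j+1}>U_j \quad\Longrightarrow\quad X_{j+1}>U_j .
\]
Both facts hold pathwise, without any continuity assumption. Continuity of $F$ is used only to discard the null event of ties among $X_1,X_2,\dots$.

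The induction runs as follows. The base case is $v_1^{(k)}=\mu_1^{(k)}=k$. For the step, assume $v_n^{(k)}\le\mu_n^{(k)}$; if $\mu_n^{(k)}=\infty$ there is nothing to prove, so suppose it is finite. Let $j=\mu_{n+1}^{(k)}$, assumed finite, so that $U_j>U_{\mu_n^{(k)}}$. Since $(U_i)$ is non-decreasing and $U_{\mu_n^{(k)}}\ge U_{v_n^{(k)}}$, there is a first index $i\in(\mu_n^{(k)},j]$ with $U_i>U_{i-1}$. By the observation above, $X_i>U_{i-1}=U_{\mu_n^{(k)}}\ge U_{v_n^{(k)}}$. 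Moreover $i>\mu_n^{(k)}\ge v_n^{(k)}$, so $i$ belongs to the set whose infimum defines $v_{n+1}^{(k)}$. Hence $v_{n+1}^{(k)}\le i\le \mu_{n+1}^{(k)}$. If $\mu_{n+1}^{(k)}=\infty$, the inequality is trivial.

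The step I expect to need the most care is the convention for infinite times: if $v_n^{(k)}=\infty$, then $U_{v_n^{(k)}}$ is undefined. I will adopt the convention that all subsequent times are $\infty$. Under this convention the induction hypothesis $v_n^{(k)}\le\mu_n^{(k)}$ forces $\mu_n^{(k)}=\infty$ whenever $v_n^{(k)}=\infty$, so the argument goes through.

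Alternatively, one can note that all record times are almost surely finite. This holds because $U_j\to \sup\operatorname{supp}F$ and, for continuous $F$, $P(X>x)>0$ for every $x$ below that supremum. If this fact is needed, I would prove it via the kernel $Q_k$: the atom mass $F(x)<1$ makes the holding times geometric. The comparison itself, however, requires only the pathwise monotonicity argument above.
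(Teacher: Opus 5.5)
Your argument is correct and is essentially the paper's induction: use monotonicity of $(U_j)$ and exhibit an index no larger than $\mu_{n+1}^{(k)}$ that is admissible for $v_{n+1}^{(k)}$. Your step is more careful than the paper's, though. You take the first increase index $i$ (which is in fact $\mu_{n+1}^{(k)}$ itself) and use $U_i>U_{i-1}\Rightarrow X_i>U_{i-1}$. The paper instead asserts $X_j\ge U_j$ for every $j>\mu_n^{(k)}$ with $U_j>U_{\mu_n^{(k)}}$, which holds only at the first such $j$ (e.g.\ $k=2$, $X_1,\dots,X_4=1,2,5,0$ gives $U_4=2>U_2=1$ yet $X_4<U_4$), and your treatment of infinite record times fills a point the paper leaves implicit.
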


\begin{proof}
The proof proceeds by induction. The result is trivial for $n=1$.

Assume that $v_n^{(k)} \le \mu_n^{(k)}$. Let $j>\mu_n^{(k)}$ be such that
\[
U_j>U_{\mu_n^{(k)}}.
\]
Since $(U_m)$ is non-decreasing,
\[
U_{v_n^{(k)}} \le U_{\mu_n^{(k)}}.
\]
Hence
\[
X_j \ge U_j > U_{\mu_n^{(k)}} \ge U_{v_n^{(k)}},
\]
so $j$ is admissible in the definition of $v_{n+1}^{(k)}$. Therefore
\[
v_{n+1}^{(k)} \le \mu_{n+1}^{(k)}.
\]
\end{proof}

\medskip

\begin{lemma}
\label{lem:Un-constant-between-records}
Let $n\ge1$. For all $j$ such that
\[
v_n^{(k)} \le j < v_{n+1}^{(k)},
\]
one has
\[
U_j = U_{v_n^{(k)}}.
\]
\end{lemma}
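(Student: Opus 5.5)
The argument is an induction on $j$ within the window $v_n^{(k)}\le j<v_{n+1}^{(k)}$, based on a one-step observation about how $U$ evolves. When a new observation does not exceed the current $k$th largest value, that value does not change: if $X_{j+1}\le U_j$, then $U_{j+1}=U_j$. This is the first bullet in the proof of Theorem~\ref{thm:transition-kernel-Un}. It can also be read off from the recursion \eqref{eq:Yn-recursion}. Inserting $x=X_{j+1}\le y_1$ into the ordered vector $Y_j=(y_1,\dots,y_k)$ and discarding the smallest of the $k+1$ entries gives $\psi_k(Y_j,x)=Y_j$ when $x<y_1$. When $x=y_1$ the discarded entry equals $y_1$, so the first coordinate of the result is still $y_1$. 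Hence in both cases $U_{j+1}=U_j$. The point holds deterministically, including ties, so no almost-sure qualification is needed except where \eqref{eq:Yn-recursion} itself is used.

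The induction goes as follows. The base case $j=v_n^{(k)}$ is trivial. Suppose $v_n^{(k)}\le j<j+1<v_{n+1}^{(k)}$ and $U_j=U_{v_n^{(k)}}$. Since $j+1$ lies strictly between $v_n^{(k)}$ and $v_{n+1}^{(k)}$, the minimality in the infimum of Definition~\ref{def:type2-k-record-times-section4} forces $X_{j+1}\le U_{v_n^{(k)}}=U_j$. The one-step observation then gives $U_{j+1}=U_j=U_{v_n^{(k)}}$, which closes the induction.

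Two minor points need attention. First, if $v_{n+1}^{(k)}=\infty$ (the infimum of an empty set), the same induction runs over all $j\ge v_n^{(k)}$, and the statement still holds with the range read as $j\ge v_n^{(k)}$. Second, I would make sure the one-step claim is stated for the order-statistic definition directly, and not only through $\psi_k$. Among $X_1,\dots,X_{j+1}$, the $k-1$ values strictly above $U_j$ together with $U_j$ itself remain the top $k$ values, since the new point does not exceed $U_j$. So the $k$th largest value is unchanged. I expect no real obstacle. The only delicate step is the tie case $X_{j+1}=U_j$, which is handled by the remark above and in any case has probability zero under continuity of $F$.
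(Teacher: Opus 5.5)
Your proof is correct and follows the same idea as the paper: minimality of $v_{n+1}^{(k)}$ gives $X_m\le U_{v_n^{(k)}}$ for all $v_n^{(k)}<m\le j$, so the $k$th largest value cannot change. The paper states this in one step for the whole window, while you unfold it as an induction on $j$ via the one-step update rule, which also makes the tie case and the case $v_{n+1}^{(k)}=\infty$ explicit.
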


\begin{proof}
By definition of $v_{n+1}^{(k)}$, we have
\[
X_m \le U_{v_n^{(k)}} \quad \text{for all } m=v_n^{(k)}+1,\dots,j.
\]
Hence the $k$ largest observations among $X_1,\dots,X_j$ are the same as among $X_1,\dots,X_{v_n^{(k)}}$, which yields the result.
\end{proof}

\medskip

\begin{lemma}
\label{lem:mun-le-vn}
Assume that $F$ is continuous. Then, for all $n\ge1$,
\[
\mu_n^{(k)} \le v_n^{(k)} \quad \text{almost surely}.
\]
\end{lemma}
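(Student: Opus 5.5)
We argue by induction on $n$, in parallel with Lemma~\ref{lem:vn-le-mun}. The case $n=1$ is trivial, since $\mu_1^{(k)}=v_1^{(k)}=k$. The real content is to upgrade the inductive hypothesis to an equality. Combining the hypothesis $\mu_n^{(k)}\le v_n^{(k)}$ with Lemma~\ref{lem:vn-le-mun}, we may assume $\mu_n^{(k)}=v_n^{(k)}=:\tau$ almost surely. So it suffices to show that, at the common time $\tau$, the next record time of $(U_j)$ is at most $v_{n+1}^{(k)}$.

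Write $j^\ast=v_{n+1}^{(k)}$ and assume $j^\ast<\infty$; otherwise there is nothing to prove. By Lemma~\ref{lem:Un-constant-between-records}, $U_{j^\ast-1}=U_\tau$. By definition of $j^\ast$, we also have $X_{j^\ast}>U_\tau$. The key step is to show that $U_{j^\ast}>U_{j^\ast-1}$, and we use the update mechanism $\psi_k$ from \eqref{eq:Yn-recursion}. At time $j^\ast-1$ the $k$ largest observations are $y_1=U_\tau\le y_2\le\cdots\le y_k$, and $X_{j^\ast}>y_1$ is inserted. The new $k$th largest is then $\min(y_2,X_{j^\ast})$, or simply $X_{j^\ast}$ when $k=1$. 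Hence
\[
U_{j^\ast}=\min\bigl(X_{j^\ast-k+2:j^\ast-1},\,X_{j^\ast}\bigr).
\]
This exceeds $U_\tau$ provided $X_{j^\ast-k+2:j^\ast-1}>X_{j^\ast-k+1:j^\ast-1}$, that is, provided the $k$th and $(k-1)$th largest of the first $j^\ast-1$ observations are distinct.

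This is the only place where continuity of $F$ enters. Since $F$ is continuous, the event $\{X_i=X_l\ \text{for some } i\neq l\}$ is a countable union of null events and so has probability zero. Outside this event all order statistics are strict, so $U_{j^\ast}>U_\tau=U_{\mu_n^{(k)}}$. Therefore $j^\ast$ is admissible in the definition of $\mu_{n+1}^{(k)}$, which gives $\mu_{n+1}^{(k)}\le v_{n+1}^{(k)}$ almost surely. Intersecting the countably many almost sure events over $n$ completes the induction.

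The main care point is bookkeeping rather than a genuine obstacle. We must invoke Lemma~\ref{lem:vn-le-mun} inside the induction to obtain equality of the current times, since the bare inequality $\mu_n^{(k)}\le v_n^{(k)}$ alone does not control the next step. We must also handle the case $j^\ast=\infty$ and treat $k=1$ separately, where there is no $y_2$ and the claim is immediate from $U_{j^\ast}=X_{j^\ast}$.
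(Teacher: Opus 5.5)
Your proof is correct and follows the same induction as the paper: you show that the index realizing $v_{n+1}^{(k)}$ is admissible for $\mu_{n+1}^{(k)}$. You justify the strict increase $U_{j^\ast}>U_\tau$ more carefully, through Lemma~\ref{lem:Un-constant-between-records} and the absence of ties, where the paper only asserts that ``$X_j$ enters the top $k$ values.'' One correction: the upgrade to equality via Lemma~\ref{lem:vn-le-mun} is unnecessary, contrary to your remark, because the bare hypothesis already gives $j^\ast>v_n^{(k)}\ge\mu_n^{(k)}$ and, by monotonicity of $(U_m)$, $U_{\mu_n^{(k)}}\le U_{v_n^{(k)}}<U_{j^\ast}$.
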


\begin{proof}
Again we argue by induction. The case $n=1$ is trivial.

Assume $\mu_n^{(k)} \le v_n^{(k)}$. Let $j>v_n^{(k)}$ be such that
\[
X_j > U_{v_n^{(k)}}.
\]
Then $X_j$ enters the top $k$ values at time $j$, and therefore
\[
U_j > U_{v_n^{(k)}}.
\]
Since $(U_m)$ is non-decreasing,
\[
U_{\mu_n^{(k)}} \le U_{v_n^{(k)}},
\]
hence
\[
U_j > U_{\mu_n^{(k)}}.
\]
Thus $j$ is admissible for $\mu_{n+1}^{(k)}$, and we obtain
\[
\mu_{n+1}^{(k)} \le v_{n+1}^{(k)}.
\]
\end{proof}

\medskip

\begin{theorem}
\label{thm:vn-equals-mun}
Assume that $F$ is continuous. Then, for all $n\ge1$,
\[
v_n^{(k)} = \mu_n^{(k)} \quad \text{almost surely}.
\]
\end{theorem}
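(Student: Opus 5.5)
The plan is to combine Lemma~\ref{lem:vn-le-mun} and Lemma~\ref{lem:mun-le-vn}, which give the two inequalities $v_n^{(k)}\le\mu_n^{(k)}$ and $\mu_n^{(k)}\le v_n^{(k)}$ almost surely for each fixed $n$. Since there are countably many $n$, the intersection of the corresponding full-probability events has probability one. On that intersection both inequalities hold, so $v_n^{(k)}=\mu_n^{(k)}$ simultaneously for all $n\ge1$.

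The two inductions as written step from $n$ to $n+1$ using only one-sided hypotheses. To make the argument airtight, I would instead run a single induction on the statement $v_n^{(k)}=\mu_n^{(k)}$, working on the almost sure event where the $X_i$ are pairwise distinct (continuity of $F$). Assume $v_n^{(k)}=\mu_n^{(k)}=:\tau$. Then both $v_{n+1}^{(k)}$ and $\mu_{n+1}^{(k)}$ are first-passage times after $\tau$ measured against the same level $U_\tau$, and it suffices to show that for $j>\tau$ with $U_{j-1}=U_\tau$ we have
\[
X_j>U_\tau \iff U_j>U_\tau .
\]
\begin{itemize}
\item For the direction ($\Leftarrow$): if $X_j\le U_\tau$, then by Lemma~\ref{lem:Un-constant-between-records} (or directly from the update map $\psi_k$) the top $k$ values do not change, so $U_j=U_\tau$.
\item For the direction ($\Rightarrow$): if $X_j>U_\tau=U_{j-1}$, then $U_j$ is the minimum of $X_j$ and the $k-1$ values strictly above $U_{j-1}$. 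All of these exceed $U_\tau$, where strictness uses the distinctness of the observations.
\end{itemize}
Hence the two infima coincide, including the case where both are $+\infty$. This also correctly handles intermediate times $\tau<m<j$, because there $U_m=U_\tau$ by minimality.

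The main subtlety is the role of continuity. Ties could make $X_j$ exceed $U_{j-1}$ while $U_j=U_{j-1}$, namely when there are repeated values among the top $k$. So the key step is to restrict to the event $\{X_i\neq X_j \text{ for all } i\neq j\}$, which has probability one because $F$ is continuous. The other point to check is that the non-decrease of $(U_m)$ holds pathwise, which it does.
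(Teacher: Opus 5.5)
Your proposal is correct and is essentially the paper's proof, which just combines Lemmas~\ref{lem:vn-le-mun} and~\ref{lem:mun-le-vn}; your single induction on equality repackages the same two observations, using Lemma~\ref{lem:Un-constant-between-records} for one direction and distinctness of the observations for the other. One correction to your motivation: the one-sided hypotheses do suffice, because monotonicity of $(U_m)$ gives $U_{v_n^{(k)}}\le U_{\mu_n^{(k)}}$ (resp.\ the reverse). The real looseness in the paper's lemmas is that they argue for an arbitrary admissible $j$ rather than the first one ($j=\mu_{n+1}^{(k)}$, resp.\ $j=v_{n+1}^{(k)}$), and your restriction to $U_{j-1}=U_\tau$ is exactly what repairs that.
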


\begin{proof}
Combine Lemmas~\ref{lem:vn-le-mun} and \ref{lem:mun-le-vn}.
\end{proof}


\section{The Markov structure of the $k$-record process}
\label{sec:k-record-process}

In this section we combine the results of Sections~\ref{sec:markov-order-statistics} and \ref{sec:record-times} in order to obtain a Markovian description of the $k$-record process.

Recall that the sequence of $k$-record values is given by
\[
R_n^{(k)} = U_{v_n^{(k)}}, \qquad n\ge1.
\]
By Theorem~\ref{thm:vn-equals-mun}, and under the continuity assumption on $F$, we have
\[
R_n^{(k)} = U_{\mu_n^{(k)}} \quad \text{almost surely}.
\]
Thus $(R_n^{(k)})$ coincides with the sequence of record values of the Markov chain $(U_n)$.

\subsection{Transition kernel of the $k$-record chain}
\label{subsec:transition-kernel}

We first determine the transition kernel of the sequence $(R_n^{(k)})$.

The key step is the following general lemma concerning first hitting times above a level for a Markov chain with an atom.

\begin{lemma}
\label{lem:hitting-time-kernel}
Let $(X_n)_{n\ge0}$ be a time-homogeneous Markov chain on $\mathbb{R}$ with transition kernel $Q$. Fix $x\in\mathbb{R}$ and define
\[
\tau_x = \inf\{n\ge1 : X_n > x\}.
\]
Assume that
\[
Q(x,\{x\}) = p \in [0,1), \qquad Q(x,(x,\infty)) = 1-p,
\]
and that, conditionally on $X_0=x$, the event $\{X_1>x\}$ has positive probability.

Then, for every Borel set $A\subseteq (x,\infty)$,
\[
\mathbb{P}_x(X_{\tau_x}\in A)
=
\frac{Q(x,A)}{Q(x,(x,\infty))}.
\]
\end{lemma}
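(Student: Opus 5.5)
The argument is a first-step decomposition using the Markov property at each time the chain is still sitting at the level $x$. Under $\mathbb{P}_x$, the chain either stays exactly at $x$ (probability $p$) or jumps strictly above $x$ (probability $1-p$). Since $Q(x,\{x\})+Q(x,(x,\infty))=1$, no mass goes below $x$. Consequently, before $\tau_x$ the chain is almost surely identically equal to $x$. We prove this by induction on $n$: on $\{\tau_x>n\}$ we have $X_1=\dots=X_n=x$ a.s.

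Fix a Borel set $A\subseteq(x,\infty)$ and write
\[
\mathbb{P}_x(X_{\tau_x}\in A)=\sum_{n\ge1}\mathbb{P}_x(\tau_x=n,\ X_n\in A).
\]
By the induction claim, $\{\tau_x=n, X_n\in A\}$ coincides a.s. with $\{X_1=\dots=X_{n-1}=x,\ X_n\in A\}$. Applying the Markov property successively (time-homogeneity with kernel $Q$), its probability is
\[
Q(x,\{x\})^{n-1}Q(x,A)=p^{n-1}Q(x,A).
\]
Summing the geometric series, which converges because $p<1$, gives
\[
\mathbb{P}_x(X_{\tau_x}\in A)=\frac{Q(x,A)}{1-p}=\frac{Q(x,A)}{Q(x,(x,\infty))}.
\]
Taking $A=(x,\infty)$ also gives $\mathbb{P}_x(\tau_x<\infty)=1$, so $X_{\tau_x}$ is a.s. well defined and the identity is the full law of $X_{\tau_x}$.

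The main obstacle is the rigorous justification of the joint probability $\mathbb{P}_x(X_1=\dots=X_{n-1}=x, X_n\in A)=p^{n-1}Q(x,A)$. This is where the Markov property enters. I would derive it from the finite-dimensional distributions of the chain started at $x$:
\[
\mathbb{P}_x(X_1\in B_1,\dots,X_n\in B_n)=\int_{B_1}Q(x,dy_1)\cdots\int_{B_n}Q(y_{n-1},dy_n).
\]
Specialise to $B_1=\dots=B_{n-1}=\{x\}$, so that each inner integral is evaluated only at the point $y_i=x$. This avoids any need for regularity of $Q(y,\cdot)$ at points $y\neq x$, and is exactly why the atom structure at the single point $x$ suffices.

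The induction claim that the chain cannot go below $x$ uses the same formula with $B_i=\{x\}$ and $B_n=(-\infty,x)$, which yields probability $p^{n-1}Q(x,(-\infty,x))=0$. Together with $\{\tau_x>n\}=\{X_1\le x,\dots,X_n\le x\}$, this identifies $\{\tau_x>n\}$ with $\{X_1=\dots=X_n=x\}$ up to a null set.
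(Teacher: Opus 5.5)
Your proposal is correct and follows the same route as the paper: decompose according to the value of $\tau_x$, compute $\mathbb{P}_x(\tau_x=n,\ X_n\in A)=p^{n-1}Q(x,A)$ via the Markov property, and sum the geometric series. Your extra step uses $Q(x,(-\infty,x))=0$ to show that $\{\tau_x=n\}$ coincides with $\{X_1=\cdots=X_{n-1}=x,\ X_n>x\}$ up to a $\mathbb{P}_x$-null set; this usefully makes rigorous an identity that the paper asserts as an exact equality without justification.
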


\begin{proof}
For $n\ge1$, we have
\[
\{ \tau_x = n \}
=
\{X_1 = x, \dots, X_{n-1}=x, X_n > x\}.
\]
By the Markov property,
\[
\mathbb{P}_x(\tau_x = n, X_n \in A)
=
p^{\,n-1} Q(x,A).
\]
Summing over $n\ge1$, we obtain
\[
\mathbb{P}_x(X_{\tau_x}\in A)
=
\sum_{n=1}^\infty p^{\,n-1} Q(x,A)
=
\frac{1}{1-p} Q(x,A).
\]
Since $1-p = Q(x,(x,\infty))$, the result follows.
\end{proof}

\medskip

We now apply this lemma to the process $(U_n)$.

\begin{theorem}
\label{thm:k-record-kernel}
Assume that $F$ is continuous. Then the process $(R_n^{(k)})_{n\ge1}$ is a time-homogeneous Markov chain. Its transition kernel $K_k$ is given by
\begin{equation}
\label{eq:k-record-kernel}
K_k(x,A)
=
\int_{A\cap(x,\infty)}
\frac{k\,(1-F(y))^{k-1}}{(1-F(x))^k}\, dF(y),
\qquad A\in\mathcal{B}(\mathbb{R}).
\end{equation}
\end{theorem}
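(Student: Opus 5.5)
The plan is to realise $(R_n^{(k)})$ as the chain $(U_n)$ sampled at its successive record times, and to combine the strong Markov property of $(U_n)$ with Lemma~\ref{lem:hitting-time-kernel} applied to the kernel $Q_k$ of Theorem~\ref{thm:transition-kernel-Un}. By Theorem~\ref{thm:vn-equals-mun}, $R_n^{(k)}=U_{\mu_n^{(k)}}$ almost surely. Moreover,
\[
\mu_{n+1}^{(k)}=\inf\{j>\mu_n^{(k)}:U_j>U_{\mu_n^{(k)}}\}
\]
is the first passage of $(U_j)$ strictly above its current level after time $\mu_n^{(k)}$. Each $\mu_n^{(k)}$ is a stopping time for the natural filtration $(\mathcal G_n)$, and it is almost surely finite: given $U_{\mu_n^{(k)}}=x$ with $F(x)<1$, each step leaves the level $x$ with probability $1-F(x)>0$. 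The case $F(x)=1$ occurs with probability zero, since $U_k$ has a continuous law and its support lies below $\sup\{t:F(t)<1\}$.

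First I check the hypotheses of Lemma~\ref{lem:hitting-time-kernel} for $Q=Q_k$ at a point $x$ with $F(x)<1$. By \eqref{eq:transition-kernel-Un}, $Q_k(x,\{x\})=F(x)=:p<1$. Since $F$ is continuous, the integral term contributes no mass at $x$, so
\[
Q_k(x,(x,\infty))=\frac{(1-F(x))^k}{(1-F(x))^{k-1}}=1-F(x)=1-p>0.
\]
Hence the lemma gives, for $A\subseteq(x,\infty)$,
\[
\mathbb P_x(U_{\tau_x}\in A)=\frac{Q_k(x,A)}{1-F(x)}=\int_{A}\frac{k(1-F(y))^{k-1}}{(1-F(x))^{k}}\,dF(y),
\]
which is exactly $K_k(x,A)$ for $A\subseteq(x,\infty)$. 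For a general Borel set $A$ we replace $A$ by $A\cap(x,\infty)$, because $U_{\tau_x}>x$ by definition. It also follows that $K_k(x,\cdot)$ is a probability measure.

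Second, I pass from this one-step computation to the Markov property of $(R_n)$. Here I apply the strong Markov property of the time-homogeneous chain $(U_n)$ at the stopping time $\mu_n^{(k)}$. Conditionally on $\mathcal G_{\mu_n^{(k)}}$ and on $U_{\mu_n^{(k)}}=x$, the shifted process $(U_{\mu_n^{(k)}+m})_{m\ge0}$ is a copy of the chain started at $x$ with kernel $Q_k$. Under this identification, $\mu_{n+1}^{(k)}-\mu_n^{(k)}$ becomes $\tau_x$. Therefore
\[
\mathbb P\bigl(R_{n+1}^{(k)}\in A\mid\mathcal G_{\mu_n^{(k)}}\bigr)=K_k\bigl(R_n^{(k)},A\bigr).
\]
Since $R_1^{(k)},\dots,R_n^{(k)}$ are $\mathcal G_{\mu_n^{(k)}}$-measurable, conditioning down to $\sigma(R_1^{(k)},\dots,R_n^{(k)})$ yields the Markov property with the time-homogeneous kernel $K_k$.

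The main obstacle is justifying the strong Markov property for $(U_n)$ with respect to $(\mathcal G_n)$. Theorem~\ref{thm:transition-kernel-Un} only establishes the ordinary Markov property with respect to the natural filtration, and Remark~\ref{rem:kernel-special-case-k1Markov} warns that $(U_n)$ is not Markov for $(\mathcal F_n)$. For discrete time and a countably-valued stopping time this is routine: decompose on the events $\{\mu_n^{(k)}=m\}\in\mathcal G_m$ and apply the simple Markov property at each fixed $m$.

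The delicate point is whether the simple Markov property, in the form "the conditional law given $\mathcal G_m$ of the whole future $(U_{m+1},U_{m+2},\dots)$ is that of the chain started at $U_m$", really holds. The paper proves only the one-step statement, and the multi-step version follows from it by the standard induction over finite-dimensional distributions. If that step feels shaky, I would try a fallback that avoids $(\mathcal G_n)$ altogether. Work with $(\mathcal F_n)$ and the Markov chain $(Y_n)$ of Proposition~\ref{prop:vector-process-markovy}, which is strongly Markov. At time $\mu_n^{(k)}$ the top $k-1$ values above $U_{\mu_n^{(k)}}=x$ are distributed as in Lemma~\ref{lem:conditional-upper-order-stats}. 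Then compute $R_{n+1}^{(k)}$ directly as the minimum of these $k-1$ values and the first subsequent observation exceeding $x$, which has law $F_x$. This gives tail $\bigl((1-F(y))/(1-F(x))\bigr)^k$, again matching $K_k$.
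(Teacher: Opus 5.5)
Your argument is correct, but its main line differs from the paper's at the step that matters. The paper notes that $(U_n)$ is not Markov for $(\mathcal F_n)$. It therefore applies the strong Markov property of the vector chain $(Y_n)$ at $\mu_n^{(k)}$, and invokes Lemma~\ref{lem:conditional-upper-order-stats} to integrate out the $k-1$ coordinates above $U_{\mu_n^{(k)}}=x$. You observe instead that every $\mu_n^{(k)}$ is already a $(\mathcal G_n)$-stopping time. So the strong Markov property of $(U_n)$ relative to $(\mathcal G_n)$ gives the identification of the post-$\mu_n^{(k)}$ process with the $Q_k$-chain started at $x$ directly. You get that property from Theorem~\ref{thm:transition-kernel-Un} by the routine induction and decomposition over $\{\mu_n^{(k)}=m\}$. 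From there both proofs finish identically with Lemma~\ref{lem:hitting-time-kernel}.

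Your route is shorter and avoids the paper's most delicate move. That move applies a fixed-time lemma, which conditions on $U_n$ alone, at the random time $\mu_n^{(k)}$ while asserting independence from the past. The price is that everything rests on Theorem~\ref{thm:transition-kernel-Un} holding relative to the full $U$-history, whereas the paper's proof of that theorem only computes the conditional law given $U_n$. The fact underneath is the same for both routes: conditionally on $\mathcal G_n$, the top $k-1$ values are distributed as the order statistics of $k-1$ i.i.d.\ $F_{U_n}$ variables. This follows by a short induction on $n$ from the update rule, since among $k$ i.i.d.\ $F_x$ variables the others, given their minimum $m$, are i.i.d.\ $F_m$.

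Consequently your fallback through $(Y_n)$ is not more robust than your main argument. Its claim about the top $k-1$ values at time $\mu_n^{(k)}$ needs exactly this fact at a stopping time. It does have the merit of exhibiting $R_{n+1}^{(k)}$ as the minimum of $k$ i.i.d.\ $F_x$ variables, which yields $K_k$ without Lemma~\ref{lem:hitting-time-kernel}.

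One small point: your justification that $F(U_{\mu_n^{(k)}})<1$ almost surely only treats $n=1$. For general $n$, use $F(U_j)=V_j<1$ almost surely for all $j$.
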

\begin{proof}
Since $(U_n)$ is Markov only with respect to its own filtration $(\mathcal G_n)$  (see Remark \ref{rem:kernel-special-case-k1Markov}), and not with respect to $(\mathcal F_n)$. Therefore, the argument proceeds via the vector process $(Y_n)$, which is Markov with respect to $(\mathcal F_n)$.
Fix $n\ge1$ and condition on $R_n^{(k)}=x$. Using the identity
\[
R_n^{(k)} = U_{\mu_n^{(k)}},
\]
we have
\[
R_{n+1}^{(k)} = U_{\mu_{n+1}^{(k)}},
\]
where $\mu_{n+1}^{(k)}$ is the first time after $\mu_n^{(k)}$ at which $U_j>x$.

By the strong Markov property of $(Y_n)$
(Proposition~\ref{prop:vector-process-markovy}) at the $(\mathcal F_n)$-stopping time $\mu_n^{(k)}$, the post-$\mu_n^{(k)}$ process
$(Y_{\mu_n^{(k)}+m})_{m\ge0}$ depends on the past only through
$Y_{\mu_n^{(k)}}$.  By Lemma~\ref{lem:conditional-upper-order-stats},
conditionally on the first component $U_{\mu_n^{(k)}}=x$, the remaining
components of $Y_{\mu_n^{(k)}}$ are distributed as the order statistics
of $k-1$ independent random variables with distribution function~$F_x$,
independently of the past.  Hence the post-$\mu_n^{(k)}$ law of
$(U_m)_{m\ge\mu_n^{(k)}}$ coincides with that of the Markov chain
$(U_n)$ started from~$x$.  In particular,
\[
\mathbb{P}(R_{n+1}^{(k)}\in A \mid R_n^{(k)}=x)
=
\mathbb{P}_x(U_{\tau_x}\in A),
\]
where $\tau_x=\inf\{j\ge1:U_j>x\}$.

We now apply Lemma~\ref{lem:hitting-time-kernel} to the chain $(U_n)$. From Theorem~\ref{thm:transition-kernel-Un}, we have
\[
Q_k(x,\{x\}) = F(x), \qquad Q_k(x,(x,\infty)) = 1-F(x),
\]
and for $A\subseteq(x,\infty)$,
\[
Q_k(x,A)
=
\int_A k\frac{(1-F(y))^{k-1}}{(1-F(x))^{k-1}}\, dF(y).
\]

Therefore, by Lemma~\ref{lem:hitting-time-kernel},
\[
\mathbb{P}_x(U_{\tau_x}\in A)
=
\frac{Q_k(x,A)}{1-F(x)},
\]
which yields \eqref{eq:k-record-kernel}.
\end{proof}

\subsection{Consequences for record values and record times}
\label{subsec:consequences}

We summarise some immediate consequences of the preceding results.

\begin{corollary}
\label{cor:strict-increase}
Assume that $F$ is continuous. Then the sequence $(R_n^{(k)})_{n\ge1}$ is strictly increasing almost surely.
\end{corollary}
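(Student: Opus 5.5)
The natural route is through the transition kernel $K_k$ of Theorem~\ref{thm:k-record-kernel}. By that theorem, conditionally on $R_n^{(k)}=x$, the law of $R_{n+1}^{(k)}$ is $K_k(x,\cdot)$. This measure is concentrated on $(x,\infty)$, because the integral in \eqref{eq:k-record-kernel} runs over $A\cap(x,\infty)$. Hence $\mathbb{P}(R_{n+1}^{(k)}\le R_n^{(k)}\mid R_n^{(k)}=x)=K_k(x,(-\infty,x])=0$ for every $x$ with $F(x)<1$. Integrating against the law of $R_n^{(k)}$ then gives $\mathbb{P}(R_{n+1}^{(k)}>R_n^{(k)})=1$ for each $n$. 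A countable intersection over $n\ge1$ yields strict increase of the whole sequence almost surely.

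Two technical points need checking. The first is that $K_k(x,\cdot)$ is a probability measure, so the conditioning makes sense. Substituting $t=1-F(y)$ gives $\int_{(x,\infty)}k(1-F(y))^{k-1}\,dF(y)=(1-F(x))^k$, using continuity of $F$. The second is that $F(R_n^{(k)})<1$ almost surely, so the kernel is evaluated only where it is defined. For this I would use $F(R_n^{(k)})=V$-type variables: $F(U_j)=V_j$, and $V_j$ has a Beta density on $(0,1)$ (Lemma~\ref{lem:conditional-upper-uniform-order-stats}). At the random times $\mu_n^{(k)}$, I would argue inductively that if $F(R_n^{(k)})<1$, then $R_{n+1}^{(k)}$ has law $K_k(R_n^{(k)},\cdot)$, which puts no mass on $\{y:F(y)=1\}$ since $dF$ does not charge that set.

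Alternatively, a direct pathwise argument avoids kernels and is probably what I would present as the main proof, with the kernel argument as a remark. By Theorem~\ref{thm:vn-equals-mun}, $R_n^{(k)}=U_{\mu_n^{(k)}}$ almost surely. By Definition~\ref{def:record-times-Un-section4}, $U_{\mu_{n+1}^{(k)}}>U_{\mu_n^{(k)}}$ holds whenever $\mu_{n+1}^{(k)}<\infty$. So strict increase reduces to showing that all $\mu_n^{(k)}$ are finite almost surely.

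Finiteness is the only real obstacle. Given $U_{\mu_n}=x$ with $F(x)<1$, the event that $X_j>x$ for some $j>\mu_n$ has probability one. This follows from Borel--Cantelli applied to the i.i.d. variables after the stopping time, whose failure probability is $\lim_m F(x)^m=0$. Such an $X_j$ enters the top $k$ values, so $U_j>x$, as in Lemma~\ref{lem:mun-le-vn}. Combining this with $F(U_{\mu_n})<1$ almost surely, shown inductively as above, and taking a countable union of null sets completes the argument.
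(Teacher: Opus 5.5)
Your pathwise argument is the paper's proof. The paper combines $R_n^{(k)}=U_{\mu_n^{(k)}}$ (Theorem~\ref{thm:vn-equals-mun}) with the defining inequality $U_{\mu_{n+1}^{(k)}}>U_{\mu_n^{(k)}}$, and your check that every $\mu_n^{(k)}$ is almost surely finite supplies a point the paper leaves implicit. That check needs no kernel induction: each $U_{\mu_n^{(k)}}$ is one of the $X_i$, and $\mathbb{P}(F(X_i)<1 \text{ for all } i)=1$ by continuity of $F$.
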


\begin{proof}
This follows from the definition of record times of the process $(U_n)$.
\end{proof}

\medskip

\begin{corollary}
\label{cor:markov-property-records}
Assume that $F$ is continuous. Then $(R_n^{(k)})$ is a time-homogeneous Markov chain with transition kernel $K_k$ given by \eqref{eq:k-record-kernel}.
\end{corollary}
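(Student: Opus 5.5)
This corollary is essentially a restatement of Theorem~\ref{thm:k-record-kernel}. The plan is therefore to invoke that theorem directly and then check that $K_k$ is a genuine transition kernel, so that the statement ``Markov chain with transition kernel $K_k$'' is meaningful. I would organise the argument in three steps.

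\emph{Step 1: identification of the record sequence.} By Theorem~\ref{thm:vn-equals-mun}, $R_n^{(k)}=U_{\mu_n^{(k)}}$ almost surely for every $n$. Modifying the sequence on a null set does not affect finite-dimensional laws, so it suffices to study the record values of the chain $(U_n)$.

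\emph{Step 2: the Markov property.} From the proof of Theorem~\ref{thm:k-record-kernel} I would extract a stronger statement. Let $\mathcal{F}_{\mu_n^{(k)}}$ be the stopped $\sigma$-algebra; since $R_1^{(k)},\dots,R_n^{(k)}$ are all measurable with respect to it, conditioning on it is enough. The claim is
\[
\mathbb{P}\bigl(R_{n+1}^{(k)}\in A\mid \mathcal{F}_{\mu_n^{(k)}}\bigr)=K_k\bigl(R_n^{(k)},A\bigr)\quad\text{a.s.}
\]
I would argue this via the strong Markov property of $(Y_n)$ at the stopping time $\mu_n^{(k)}$. Given $\mathcal{F}_{\mu_n^{(k)}}$, the next record value is obtained as follows. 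One waits for the first $X_j$ exceeding $x=R_n^{(k)}$; this uses $v=\mu$ and the fact that the $X_j$ after the stopping time are i.i.d.\ and independent of $\mathcal{F}_{\mu_n^{(k)}}$. The new $U$ is then the minimum of $X_j$ and the top $k-1$ entries of $Y_{\mu_n^{(k)}}$, which are all strictly above $x$.

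\emph{Step 3: the obstacle and the kernel.} The main obstacle is that the top $k-1$ entries of $Y_{\mu_n^{(k)}}$ are $\mathcal{F}_{\mu_n^{(k)}}$-measurable. Hence the conditional law given the full past depends on them, not only on $x$. The step $\mathcal{F}_{\mu_n^{(k)}}\to\sigma(R_1^{(k)},\dots,R_n^{(k)})$ must therefore be handled as in Remark~\ref{rem:kernel-special-case-k1Markov}. I would first condition on $\mathcal{F}_{\mu_n^{(k)}}$, then average out these components using Lemma~\ref{lem:conditional-upper-order-stats}. This is legitimate if, given $R_1^{(k)},\dots,R_n^{(k)}$, those components are i.i.d.\ $F_x$ order statistics. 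I expect that to follow from the same uniform order-statistics density computation at the record time, using memorylessness of exceedances; a sketch would suffice. With this in place, Lemma~\ref{lem:hitting-time-kernel} gives the form of $K_k$. Finally, $K_k(x,\cdot)$ is a probability measure: its total mass is
\[
\int_{(x,\infty)}k(1-F(y))^{k-1}\,dF(y)=(1-F(x))^k,
\]
which cancels the denominator. Measurability of $x\mapsto K_k(x,A)$ is clear from the integral form. Time-homogeneity holds because $K_k$ does not depend on $n$.
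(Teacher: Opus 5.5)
Your plan follows the paper. There the corollary is simply Theorem~\ref{thm:k-record-kernel} restated, and that theorem is proved via the strong Markov property of $(Y_n)$ at $\mu_n^{(k)}$, an averaging over the upper $k-1$ coordinates of $Y_{\mu_n^{(k)}}$, and Lemma~\ref{lem:hitting-time-kernel}. You have located the real difficulty, but you leave it as an expectation, and it carries essentially the whole proof.

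First, the identity displayed in Step~2 is false for $k\ge2$. Write $x=R_n^{(k)}$ and let $W_1<\dots<W_{k-1}$ be the upper entries of $Y_{\mu_n^{(k)}}$. Then $R_{n+1}^{(k)}=\min\bigl(W_1,X_{\mu_{n+1}^{(k)}}\bigr)$, so
\[
\mathbb{P}\bigl(R_{n+1}^{(k)}>y\mid\mathcal{F}_{\mu_n^{(k)}}\bigr)=\mathbf{1}\{W_1>y\}\,\frac{1-F(y)}{1-F(x)},\qquad y>x,
\]
which is not $K_k(x,(y,\infty))$. You retract this in Step~3, so the target must be the conditional law given $\sigma(R_1^{(k)},\dots,R_n^{(k)})$. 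Everything then rests on the conditional law of $(W_1,\dots,W_{k-1})$ given the record history.

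Lemma~\ref{lem:conditional-upper-order-stats} does not supply this. It is a statement at a deterministic time $n$ that conditions on $U_n$ alone. It says nothing about the random time $\mu_n^{(k)}$, nor about conditional independence from $R_1^{(k)},\dots,R_{n-1}^{(k)}$. The paper's proof of Theorem~\ref{thm:k-record-kernel} cites the lemma at $\mu_n^{(k)}$ in the same way, without further argument. The fact is also not obtainable from ``the same density computation'' at a random time; that route would require summing over all record-time configurations.

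The missing argument is an induction on $n$ for the statement $(H_n)$: conditionally on $R_1^{(k)},\dots,R_n^{(k)}$, the vector $(W_1,\dots,W_{k-1})$ is distributed as the order statistics of $k-1$ i.i.d.\ variables with distribution function $F_{R_n^{(k)}}$.

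For $n=1$ this is Lemma~\ref{lem:conditional-upper-order-stats} with $n=k$, since $\mu_1^{(k)}=k$.

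For the inductive step, $\mu_n^{(k)}$ is an a.s.\ finite stopping time and $F(R_n^{(k)})<1$ a.s. Hence the first exceedance $X_{\mu_{n+1}^{(k)}}$ has conditional law $F_{R_n^{(k)}}$ given $\mathcal{F}_{\mu_n^{(k)}}$. Combined with $(H_n)$, conditionally on $R_1^{(k)},\dots,R_n^{(k)}$ the $k$ values $W_1,\dots,W_{k-1},X_{\mu_{n+1}^{(k)}}$ are i.i.d.\ $F_{R_n^{(k)}}$. Since observations between record times do not change $Y$, $Y_{\mu_{n+1}^{(k)}}$ is their ordered vector and $R_{n+1}^{(k)}$ is their minimum. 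This gives at once
\[
\mathbb{P}\bigl(R_{n+1}^{(k)}>y\mid R_1^{(k)},\dots,R_n^{(k)}\bigr)=\left(\frac{1-F(y)}{1-F(R_n^{(k)})}\right)^{k},\qquad y>R_n^{(k)},
\]
that is, the Markov property with kernel $K_k$. Lemma~\ref{lem:hitting-time-kernel} is not needed. Moreover, given that the minimum equals $y$, the remaining $k-1$ values are i.i.d.\ from $F_{R_n^{(k)}}$ truncated to $(y,\infty)$, which is $F_y$. This is the ``memorylessness'' you allude to, and it is exactly $(H_{n+1})$.

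Your final check that $K_k(x,\cdot)$ has total mass one is a useful addition.
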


\medskip

\begin{remark}
\label{rem:connection-literature}
The kernel \eqref{eq:k-record-kernel} provides a direct probabilistic description of the $k$-record process. In particular, the classical representation of $k$-records in terms of ordinary record values from the transformed distribution
\[
F_{1:k}(x)=1-(1-F(x))^k
\]
may be recovered from this kernel; see Section~\ref{sec:classical-results}.
\end{remark}

%

\section{Recovery of classical distributional results}
\label{sec:classical-results}

In this section we show how the classical distributional properties of $k$-record values follow naturally from the Markovian construction developed in the previous sections.

\subsection{The representation through $F_{1:k}$}
\label{subsec:F1k-representation}

We begin by deriving the well-known representation of $k$-record values in terms of ordinary record values from a transformed distribution.

\begin{proposition}
\label{prop:F1k-kernel}
Let $F$ be continuous and define
\[
F_{1:k}(x)=1-(1-F(x))^k.
\]
Then the transition kernel $K_k$ in \eqref{eq:k-record-kernel} coincides with the transition kernel of the ordinary record process associated with the distribution function $F_{1:k}$.
\end{proposition}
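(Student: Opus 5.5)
The plan is to compute the ordinary record kernel for a general continuous distribution function $G$ and then specialise to $G=F_{1:k}$. For ordinary upper records from a continuous $G$, the $k=1$ case of the preceding results applies with $F$ replaced by $G$. Theorem~\ref{thm:k-record-kernel} with $k=1$ (equivalently, Lemma~\ref{lem:hitting-time-kernel} applied to the running-maximum kernel $Q_1$ of Remark~\ref{rem:kernel-special-case-k1}) gives the transition kernel
\[
K^{G}(x,A)=\int_{A\cap(x,\infty)}\frac{dG(y)}{1-G(x)},\qquad A\in\mathcal B(\mathbb R),
\]
for every $x$ with $G(x)<1$. This is the standard kernel of the ordinary record chain, namely $G$ truncated to $(x,\infty)$. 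I will therefore take this as the definition of ``the transition kernel of the ordinary record process associated with $F_{1:k}$.''

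Next I set $G=F_{1:k}$. Since $F$ is continuous, $F_{1:k}$ is continuous and nondecreasing. Moreover $1-F_{1:k}(x)=(1-F(x))^k$, and $F_{1:k}(x)<1$ exactly when $F(x)<1$, so the two kernels are defined on the same set of starting points. The key step is to identify the Lebesgue--Stieltjes measure $dF_{1:k}$ with $k(1-F)^{k-1}\,dF$. I will do this by checking the two measures agree on half-open intervals $(a,b]$:
\[
F_{1:k}(b)-F_{1:k}(a)=(1-F(a))^k-(1-F(b))^k=\int_{(a,b]}k(1-F(t))^{k-1}\,dF(t).
\]
The last equality is the same computation already used at the end of the proof of Theorem~\ref{thm:transition-kernel-Un}. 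To justify it rigorously, I would pass through the quantile transform: with $Z=F(X)$ uniform on $(0,1)$, the right-hand side equals $\int_{F(a)}^{F(b)}k(1-u)^{k-1}\,du$. Since both measures are finite on bounded intervals, uniqueness of extension ($\pi$--$\lambda$) gives equality on all Borel sets.

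Substituting into $K^{F_{1:k}}$ then gives, for $A\in\mathcal B(\mathbb R)$,
\[
K^{F_{1:k}}(x,A)=\int_{A\cap(x,\infty)}\frac{k(1-F(y))^{k-1}}{(1-F(x))^k}\,dF(y)=K_k(x,A),
\]
which is \eqref{eq:k-record-kernel}.

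I expect the only delicate point to be the change-of-variables identity $dF_{1:k}=k(1-F)^{k-1}dF$ when $F$ is merely continuous rather than absolutely continuous. The quantile argument above handles this by working on the uniform scale, where everything is absolutely continuous. As a remark, the initial distributions also agree: $R_1^{(k)}=U_k=X_{1:k}$ has distribution function $F_{1:k}$. This is not needed for the kernel statement, but it shows that the whole process, not just the kernel, matches the record process from $F_{1:k}$.
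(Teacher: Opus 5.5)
Your proposal is correct and follows the paper's proof: both take the ordinary record kernel $\int_{A\cap(x,\infty)} dG(y)/(1-G(x))$ and substitute $G=F_{1:k}$, using $1-G=(1-F)^k$ and $dG=k(1-F)^{k-1}\,dF$. The extra steps you add only make explicit what the paper takes for granted: deriving the ordinary kernel from the $k=1$ case, and proving the measure identity $dF_{1:k}=k(1-F)^{k-1}\,dF$ for merely continuous $F$ via half-open intervals, the probability integral transform and uniqueness of extension.
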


\begin{proof}
Recall that, for ordinary record values associated with a continuous distribution function $G$, the transition kernel is given by
\[
K_G(x,A)
=
\int_{A\cap(x,\infty)}
\frac{dG(y)}{1-G(x)}.
\]

In the present setting, let $G=F_{1:k}$. Then
\[
dG(y)
=
k(1-F(y))^{k-1} dF(y).
\]
Moreover,
\[
1-G(x)
=
(1-F(x))^k.
\]

Therefore,
\[
K_G(x,A)
=
\int_{A\cap(x,\infty)}
\frac{k(1-F(y))^{k-1}}{(1-F(x))^k}\, dF(y),
\]
which coincides with $K_k(x,A)$ in \eqref{eq:k-record-kernel}.
\end{proof}

\begin{corollary}
\label{cor:F1k-representation}
Assume that $F$ is continuous. Then the sequence $(R_n^{(k)})_{n\ge1}$ has the same distribution as the sequence of ordinary record values associated with the distribution function $F_{1:k}$.
\end{corollary}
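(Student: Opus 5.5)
The plan is to identify the full finite-dimensional distributions of both sequences. Both are time-homogeneous Markov chains with the same transition kernel, so it remains to check that their initial laws agree. The law of a Markov chain is determined by its initial distribution and its kernel, via the Ionescu--Tulcea construction or simply the iterated-integral formula for cylinder sets. Hence equality of initial laws and kernels gives equality in distribution of the whole sequences on $\mathbb{R}^{\mathbb{N}}$.

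\textbf{Kernels.} By Corollary~\ref{cor:markov-property-records}, $(R_n^{(k)})_{n\ge1}$ is a time-homogeneous Markov chain with kernel $K_k$ given by \eqref{eq:k-record-kernel}. For the ordinary records $(\tilde R_n)_{n\ge1}$ from a continuous $G=F_{1:k}$, I will apply the case $k=1$ of the same machinery with $F$ replaced by $G$. Since $F$ is continuous, $G$ is continuous, so Theorem~\ref{thm:k-record-kernel} with $k=1$ shows that $(\tilde R_n)$ is Markov with kernel $K_G(x,A)=\int_{A\cap(x,\infty)}dG(y)/(1-G(x))$. Proposition~\ref{prop:F1k-kernel} then identifies $K_G$ with $K_k$.

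\textbf{Initial laws.} We have $R_1^{(k)}=U_k=X_{1:k}=\min(X_1,\dots,X_k)$, whose distribution function is $1-(1-F(x))^k=F_{1:k}(x)$ by independence. On the other hand, $\tilde R_1$ is the first observation from $G$, so it also has distribution $F_{1:k}$. The initial laws therefore agree.

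\textbf{Conclusion and main obstacle.} For Borel sets $A_1,\dots,A_m$,
\[
\mathbb{P}(R_1^{(k)}\in A_1,\dots,R_m^{(k)}\in A_m)=\int_{A_1}\!\cdots\!\int_{A_m}K_k(x_{m-1},dx_m)\cdots K_k(x_1,dx_2)\,dF_{1:k}(x_1),
\]
and the same expression holds for $(\tilde R_n)$. The finite-dimensional distributions coincide, and hence so do the laws of the sequences. The only delicate point is that $K_k(x,\cdot)$ is defined only where $F(x)<1$. This causes no difficulty: $R_n^{(k)}$ almost surely takes values in $\{F<1\}$, which equals $\{G<1\}$ because $F$ is continuous. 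Indeed, each record value has a law absolutely continuous with respect to $dF$ restricted to $(x,\infty)$, and the set of $y$ with $F(y)=1$ is $dF$-null above any such $x$. So both chains live almost surely on the set where the common kernel is defined, and the iterated-integral formula is valid.
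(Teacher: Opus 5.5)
Your proof is correct and follows the same route as the paper: equality of the kernels via Proposition~\ref{prop:F1k-kernel}, combined with the fact that a Markov chain's law is determined by its kernel and initial distribution. You also make explicit two points the paper's one-line proof leaves implicit: the initial-law check $R_1^{(k)}=\min(X_1,\dots,X_k)\sim F_{1:k}$, and the observation that both chains almost surely stay in $\{F<1\}$, where the common kernel is defined.
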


\begin{proof}
The result follows from Proposition~\ref{prop:F1k-kernel}, since the distribution of a Markov chain is uniquely determined by its transition kernel and initial distribution.
\end{proof}

\begin{remark}
\label{rem:F1k-literature}
The representation in Corollary~\ref{cor:F1k-representation} is widely used in the literature; see, for example, Arnold et al.~\cite{ArnoldBalakrishnanNagaraja1998} and subsequent works such as \cite{HofmannBalakrishnan2004,AhmadiDoostparast2008}. The derivation given here shows that this representation follows directly from the Markovian structure of the process $(U_n)$.
\end{remark}

\subsection{Joint density formulae for the first $m$ $k$-records}
\label{subsec:joint-density}

We now recover the classical joint density of the first $m$ $k$-record values.

\begin{theorem}
\label{thm:joint-density-k-records}
Assume that $F$ is continuous with density $f$. Then the joint density of $(R_1^{(k)},\dots,R_m^{(k)})$ is given by
\[
f_{R_1^{(k)},\dots,R_m^{(k)}}(r_1,\dots,r_m)
=
k^m
\prod_{i=1}^m
\frac{f(r_i)}{1-F(r_i)}
\,(1-F(r_m))^k,
\]
for $r_1<\cdots<r_m$.
\end{theorem}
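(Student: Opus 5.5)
The plan is to use the Markov structure from Theorem~\ref{thm:k-record-kernel}: the joint density of $(R_1^{(k)},\dots,R_m^{(k)})$ is the product of the density of $R_1^{(k)}$ and the densities of the successive transition kernels $K_k(r_{i-1},\cdot)$.

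First I identify the law of $R_1^{(k)}$. By Definition~\ref{def:type2-k-record-times}, $R_1^{(k)}=U_k=X_{1:k}$, the minimum of $X_1,\dots,X_k$. Its distribution function is therefore $1-(1-F(x))^k=F_{1:k}(x)$, with density
\[
k(1-F(r_1))^{k-1}f(r_1).
\]
This could equally be read off from the marginal density of $V_n$ in the proof of Lemma~\ref{lem:conditional-upper-uniform-order-stats} with $n=k$, transported via $X_i=Q(Z_i)$.

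Next, by Theorem~\ref{thm:k-record-kernel}, for $r_{i-1}$ with $F(r_{i-1})<1$, the kernel $K_k(r_{i-1},\cdot)$ has density
\[
y\mapsto \frac{k(1-F(y))^{k-1}f(y)}{(1-F(r_{i-1}))^k}\,\mathbf 1_{\{y>r_{i-1}\}}
\]
with respect to Lebesgue measure, because $dF(y)=f(y)\,dy$. Since $(R_n^{(k)})$ is a time-homogeneous Markov chain, the joint law of the first $m$ values is the iterated measure
\[
\mathbb P(R_1^{(k)}\in dr_1)\,K_k(r_1,dr_2)\cdots K_k(r_{m-1},dr_m).
\]
Its density on $\{r_1<\cdots<r_m\}$ is therefore
\[
k(1-F(r_1))^{k-1}f(r_1)\prod_{i=2}^m\frac{k(1-F(r_i))^{k-1}f(r_i)}{(1-F(r_{i-1}))^k}.
\]
A standard Fubini/induction argument on rectangles justifies the passage from kernels to densities.

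The remaining step is to simplify this product into the stated form. Each factor $(1-F(r_i))^{k-1}$ in the numerator combines with $(1-F(r_i))^{-k}$ from the denominator of the next factor to give $(1-F(r_i))^{-1}$, for $i=1,\dots,m-1$. The last numerator factor is $(1-F(r_m))^{k-1}=(1-F(r_m))^{-1}(1-F(r_m))^k$. Collecting the constants $k^m$ yields
\[
k^m\prod_{i=1}^m\frac{f(r_i)}{1-F(r_i)}\,(1-F(r_m))^k.
\]

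I expect no real obstacle. The only delicate point is rigour in the first step, namely the density of the initial state and the justification that the finite-dimensional laws of the chain are given by iterated kernels. Null sets where $F(r_i)=1$ are irrelevant, since they carry zero probability.
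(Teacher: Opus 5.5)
Your argument is correct, but it takes a different route from the paper. The paper's proof is a reduction. It invokes Corollary~\ref{cor:F1k-representation} to identify $(R_n^{(k)})$ in law with the ordinary record values of $G=F_{1:k}$. It then quotes the classical joint density $\prod_{i=1}^m \frac{g(r_i)}{1-G(r_i)}\,(1-G(r_m))$ of the first $m$ ordinary records and substitutes $g/(1-G)=kf/(1-F)$ and $1-G=(1-F)^k$. You instead work directly from Theorem~\ref{thm:k-record-kernel}. You compute the law of the initial state $R_1^{(k)}=X_{1:k}$, write the finite-dimensional law as that initial law composed with the iterated kernels $K_k$, and telescope the product. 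In your route, Proposition~\ref{prop:F1k-kernel} and the ordinary-record formula are never needed.

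The paper's version is shorter and keeps the $F_{1:k}$ reduction in the foreground. However, it rests on an unproved, externally quoted formula. It also rests on Corollary~\ref{cor:F1k-representation}, whose proof appeals to uniqueness of a Markov chain's law given its kernel and initial distribution without checking that the initial laws agree. Your route is self-contained. Your first step supplies exactly that missing check: $R_1^{(k)}$ has distribution function $F_{1:k}$. Your telescoping computation is, in effect, a proof of the classical ordinary-record density; take $k=1$ with $F$ replaced by $G$. Your handling of the set $\{F=1\}$ is also fine, since $F(X_i)$ is uniform on $(0,1)$ and hence $F(R_i^{(k)})<1$ almost surely.
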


\begin{proof}
By Corollary~\ref{cor:F1k-representation}, the sequence $(R_n^{(k)})$ has the same distribution as the ordinary record process associated with $F_{1:k}$. The joint density of the first $m$ record values for a continuous distribution $G$ is given by
\[
\prod_{i=1}^m \frac{g(r_i)}{1-G(r_i)} \,(1-G(r_m)),
\]
for $r_1<\cdots<r_m$, where $g$ is the density of $G$.

Applying this formula with $G=F_{1:k}$ and using
\[
g(y)=k(1-F(y))^{k-1}f(y),
\qquad
1-G(y)=(1-F(y))^k,
\]
we obtain
\[
\frac{g(y)}{1-G(y)}
=
\frac{k f(y)}{1-F(y)}.
\]
Substituting into the general formula yields the result.
\end{proof}

\begin{remark}
\label{rem:joint-density-literature}
The density in Theorem~\ref{thm:joint-density-k-records} appears in several works on $k$-records; see, for instance, \cite{HofmannBalakrishnan2004,AhmadiDoostparast2008}. In those treatments it is typically introduced via the representation through $F_{1:k}$. The present approach shows that it follows directly from the Markovian construction of the $k$-record process.
\end{remark}

\medskip

\begin{remark}
\label{rem:connection-Weibull}
The representation of $k$-records through $F_{1:k}$ also underlies inferential procedures based on record values, such as those considered by Wang and Ye~\cite{WangYe2015}. In that context, transformations to exponential models and properties of record increments are often invoked. The results of the present paper provide a direct probabilistic foundation for such constructions.
\end{remark}

\section{Concluding remarks}
\label{sec:conclusion}

In this paper we have provided a direct probabilistic construction of the Type~2 $k$-record process based on the sequence of running order statistics $U_n = X_{n-k+1:n}$. We have shown that $(U_n)_{n\ge k}$ is a time-homogeneous Markov chain with an explicit transition kernel, and that, under the continuity assumption on $F$, the usual $k$-record times coincide almost surely with the record times of this process.

This identification yields a transparent description of the $k$-record values as the record values of a Markov chain. As a consequence, the transition kernel of the $k$-record process is obtained directly, and classical distributional results — including the representation in terms of the transformed distribution $F_{1:k}$ and the joint density of the first $m$ $k$-record values — follow naturally from the underlying stochastic structure.

The approach adopted here clarifies the probabilistic mechanism behind several constructions that are commonly used in the literature on $k$-records and related inferential procedures. In particular, it provides a self-contained derivation of results that are often introduced via transformation arguments or by analogy with ordinary record processes, and makes explicit, in a form adapted to the present setting, certain conditional properties of order statistics that are typically invoked without proof in the record literature.

Several directions for further investigation may be considered. It would be of interest to extend the present analysis to settings where the continuity assumption on $F$ is relaxed, as well as to other types of generalised records. Another natural direction is the study of statistical procedures based on $k$-records within the Markovian framework developed here, with particular emphasis on the role of the transition kernel in inference problems.

\medskip

\noindent
\textbf{Acknowledgements.}
This work is the fruit of several discussions with J.Hoffmann-J{\o}gensen.


\end{document}